\newcommand{\boldY}{{\bf{y}}}
\newcommand{\boldOne}{\mathbbm{1}}
\newcommand{\Z}{\mathbb{Z}}
\newcommand{\N}{\mathbb{N}}
\newcommand{\R}{\mathbb{R}}
\newcommand{\cone}{\mathrm{cone}}
\renewcommand{\phi}{\varphi}
\newcommand{\E}{\mathrm{E}}
\newcommand{\Tor}{\mathrm{Tor}}
\DeclareMathOperator*{\rank}{rank}
\DeclareMathOperator*{\LT}{LT}\DeclareMathOperator*{\LC}{LC}
\newcommand\commentout[1]{}
\newtheorem{theorem}{Theorem}[section]
\newtheorem{corollary}[theorem]{Corollary}
\newtheorem{lemma}[theorem]{Lemma}
\theoremstyle{remark}
\newtheorem{example}[theorem]{Example}
\newtheorem{remark}[theorem]{Remark}
\theoremstyle{definition}
\newtheorem{definition}[theorem]{Definition}
\newtheorem{construction}[theorem]{Construction}
\begin{document}

\title{Rationality of Poincar\'e series for a Family of Lattice Simplices}

\author{Benjamin Braun}
\address{Department of Mathematics\\
         University of Kentucky\\
         Lexington, KY 40506--0027}
\email{benjamin.braun@uky.edu}

\author{Brian Davis}
\address{}
\email{brian.davis@uky.edu}

\date{30 October 2020}

\thanks{The first author was partially supported by grant H98230-16-1-0045 from the U.S. National Security Agency.
This material is also based in part upon work supported by the National Science Foundation under Grant No. DMS-1440140 while the first author was in residence at the Mathematical Sciences Research Institute in Berkeley, California, during the Fall 2017 semester.}

\begin{abstract}
We investigate multi-graded Gorenstein semigroup algebras associated with an infinite family of reflexive lattice simplices.
For each of these algebras, we prove that their multigraded Poincar\'e series is rational.
Our method of proof is to produce for each algebra an explicit minimal free resolution of the ground field, in which the resolution reflects the recursive structure encoded in the denominator of the finely-graded Poincar\'e series.
These algebras are not Koszul and therefore rationality is non-trivial.
Our proof techniques are based on explicit combinatorial considerations, with an emphasis on understanding the combinatorial structure underlying the resolution.
Further, our results demonstrate how interactions between multivariate and univariate rational generating functions can create subtle complications when attempting to use rational Poincar\'e series to inform the construction of minimal resolutions.
\end{abstract}

\maketitle

\section{Introduction}

\subsection{Background}
For a field $K$ of characteristic $0$ and $\Z^n$-graded quotient ring $R=K[x_1,\dots,x_n]/I$, the \emph{Betti number} $\beta_{i,\alpha}^R(K)$ is the rank of the $\alpha$-graded component of the $i$-th module in a minimal free resolution of $K\cong R/(x_1,\ldots,x_n)$ over $R$. 
The \emph{Poincar\'e series} of $K$ over $R$ is the generating function
\[P_K^{R}(z,{\bf{y}}):=\sum_{\alpha\in\Z^n}\sum_{i\geq0}\beta_{i,\alpha}^R(K)z^i\boldY^\alpha=\sum_{\alpha\in\Z^n}\sum_{i\geq0}\dim_K\Tor_{i,\alpha}^{R}(K,K)z^i\boldY^\alpha,\] 
where $\boldY^\alpha$ denotes $y_1^{\alpha_1}\cdots y_n^{\alpha_n}$.
Unlike resolutions of finitely generated graded modules over $K[x_1,\dots,x_n]$, these Betti numbers are not necessarily eventually zero; as a result, interesting questions arise as to their behavior.
Traditionally, the Poincar\'e series has been considered with respect to only the variable $z$, setting $y_i=1$ for all $i$.
In this context, a classical question of Serre-Kaplansky was whether or not the Poincar\'e series is rational for all such $R$. 
This question was answered in the negative by Anick~\cite{IrrPoincare}, and much subsequent work has focused on determining the properties of $R$ that lead to rationality or irrationality~\cite{McCulloughPeevaSurvey}.

When $I$ is generated by monomials, as in the case of Stanley-Reisner theory, the Poincar\'e series is known to be rational.
Berglund, Blasiak, and Hersh~\cite{CombinatoricsPoincareMonomial} describe a combinatorial method for computing the rational form. 
Less is known about infinite graded resolutions associated to quotients by another important class of ideals in combinatorics, toric ideals~\cite{PeevaToric}. 
An example of a toric ring with transcendental Poincar\'e series was found by Roos and Sturmfels~\cite{irrationalPoincare}, and it is known by work of Gasharov, Peeva, and Welker~\cite{GenericToricRational} that quotients arising from generic toric ideals have rational Poincar\'e series.
It is not known whether rationality or irrationality of the Poincar\'e series is the more ``common'' property for toric rings.
One property that implies rationality for Poincar\'e series is when $R$ is \emph{Koszul}, i.e., when $K$ admits a linear minimal free resolution over $R$.
Rationality follows from the fact that Koszul rings have Hilbert and Poincar\'e series satisfying the functional equation
\begin{equation}
\label{eqn:koszul} H_R(-z)P_K^R(z)=1 \, . 
\end{equation}
Much of what is known about rationality of the Poincar\'e series in the toric setting is a consequence of the rationality of Hilbert series and a proof of Koszuality of  the specific algebras under consideration.

Another line of investigation relevant to this paper is the rationality of Poincar\'e series for certain Gorenstein rings.
Elias and Valla proved~\cite{EliasValla} that the Poincar\'e series of an almost stretched Gorenstein local ring of dimension $d$ and embedding codimension $h$ is given by
\begin{equation}
\label{eqn:gorpoincare} \frac{(1+z)^d}{1-hz+z^2} \, . 
\end{equation}
Rossi and \c Sega proved~\cite{RossiSega} that for any finitely-generated module over a compressed Gorenstein local ring of socle degree $2\leq s\neq 3$, the Poincar\'e series is rational.
One aspect of these results on Gorenstein local rings is that the proofs do not directly construct an explicit minimal free resolution satisfying the recursive behavior encoded in the rational series, in the sense that the combinatorial structure of the underlying resolution is not made explicit.

\subsection{Our Contributions}

For a family of lattice polytopes described in Definition~\ref{def:simplex} and denoted by $\Delta_2^m$, we prove in Theorem~\ref{mainTheorem} that the Poincar\'e series for their associated semigroup algebra $\E(\Delta_2^m)$ (defined in the next section) using a fine grading is rational, with structure similar to~\eqref{eqn:gorpoincare}.
However, the recurrence given in~\eqref{eqn:gorpoincare} is realized only after specializing our fine grading to a coarse grading and then algebraically canceling.
Our method of proof is to inductively construct an explicit resolution of $K$ over the quotient of $\E(\Delta_2^m)$ by a linear system of parameters such that the resolution reflects the recursive structure encoded in the denominator of the finely-graded Poincar\'e series.
While there are various algebraic techniques available for establishing rationality of Poincar\'e series, from our perspective as combinatorialists it is valuable to develop and understand explicit combinatorial constructions of resolutions and their relationship to rationality.
Our proof techniques follow this philosophy.
We also show that $\E(\Delta_2^m)$ is not Koszul, and therefore rationality does not follow from~\eqref{eqn:koszul}.
We believe that the results in this paper will be of interest to both combinatorialists and commutative algebraists, for the following reasons.

\begin{itemize}
\item Given a lattice polytope $P$, there has been fruitful investigation of the Hilbert series of $\E(P)$, i.e., the Ehrhart series of $P$, in relation to the geometry and arithmetics of $P$.
We believe that a similar investigation should be conducted for Poincar\'e series.
Our work is a contribution in this direction.
\item For an arbitrary lattice simplex $P$, the arithmetic properties of the fundamental parallelepiped of $P$ should significantly impact the behavior of the Poincar\'e series for $\E(P)$.
This influence should be more subtle than the interpretation of the Hilbert $h$-vector of $\E(P)$, i.e., the Ehrhart $h^*$-vector of $P$, in terms of lattice points in the fundamental parallelepiped.
Our results show how this works in a special case.
From a combinatorial perspective, it is of particular interest to determine the relationship between the combinatorial structure of the fundamental parallelepiped points and the combinatorial structure of the resulting resolution.
\item Our results demonstrate how interactions between multivariate and univariate rational generating functions that are ``typical'' in enumerative combinatorics can create subtle complications when attempting to use univariate rational Poincar\'e series to inform the construction of minimal resolutions.
\end{itemize}

The remainder of our paper is structured as follows.
In Section~\ref{sec:fpa} we describe a family of lattice simplices and their associated semigroup algebras. 
We introduce the Fundamental Parallelepiped Algebra for lattice simplices and explain its connection to Poincar\'e series. 
In Section~\ref{sec:tree} we present a tree whose weighted rank generating function is equal to the Poincar\'e series of the Fundamental Parallelepiped Algebra, and whose structure is related to the rationality of that formal power series. 
In Section~\ref{sec:rational} we state and prove our main result, Theorem~\ref{mainTheorem}, which gives a rational expression for the fine graded Poincar\'e series of the Fundamental Parallelepiped Algebra of an infinite family of lattice simplices.


\section{The Fundamental Parallelepiped Algebra and $\Delta_2^m$}\label{sec:fpa}
For a \emph{lattice polytope} $P\subset \R^d$, i.e., a polytope whose vertices $\{v_1,\ldots,v_n\}$ lie in the integer lattice $\Z^d$, we define the \emph{cone over} $P$ to be 
\begin{equation}\label{eqn:cone}
\cone(P):=\left\{\sum_{i=1}^{n}\,a_i\cdot (1,v_i)\;:\;a_i\in\R_{\geq0}\right\}\subset\R\times \R^{d} \, ,
\end{equation}
where $(1,v_i)$ is the embedding of vertex $v_i$ in $\R^{1+d}$.
The set $\Lambda:=\cone(P)\cap \Z^{1+d}$ forms a semigroup under addition.
It is known that there exists a unique minimal set of additive generators for $\Lambda$ called the \emph{Hilbert basis} of $\cone(P)$. 
We call the associated semi-group ring $\E(P):=K[\Lambda]=K[\cone(P)\cap \Z^{1+d}]$ the \emph{Ehrhart ring} of $P$.
It is well-known that $\E(P)$ is a quotient of a polynomial ring by a toric ideal.
To the lattice point $z\in \Lambda$ we associate the formal basis element $e_z\in K[\Lambda]$.
When the degree of the algebra element corresponding to the lattice point $(m_0,\ldots,m_d)\in \cone(P)\cap \Z^{1+d}$ is defined to be $m_0$, the resulting Hilbert series is referred to in combinatorics as the \emph{Ehrhart series} of $P$.
Further, it is known by a theorem of Hochster~\cite{Hochster} that $\E(P)$ is a Cohen-Macaulay integral domain.
The study of Ehrhart series for lattice polytopes is an active area in geometric combinatorics, with direct connections to Hilbert series of Cohen-Macaulay algebras.
While the study of Ehrhart series is well-established in combinatorics, the study of Poincar\'e series for $\E(P)$ has not to our knowledge been the subject of explicit investigation by combinatorialists.
When a lattice polytope has the property that $\E(P)$ is Koszul, then knowledge of the Ehrhart series is equivalent to that of the Poincar\'e series by~\eqref{eqn:koszul}.

\textit{\textbf{We assume throughout the remainder of this paper that $P$ is a lattice simplex,}} i.e., $P$ has $d+1$ vertices $\{v_1,\ldots,v_{d+1}\}\subset\Z^d$.
There is a natural decomposition of $\cone(P)$ obtained by tiling the cone with copies of the \emph{fundamental parallelepiped} of $P$, defined as follows:
\begin{equation}
\label{eqn:fppdef} \Pi:=\left\{\sum_{i=1}^{d+1} \, a_i\cdot (1,v_i)\; :\;0\leq a_i<1\right\} 
\end{equation}
Consequently, every element of $\Lambda$ has a unique representation as the sum of a lattice point in the fundamental parallelepiped and a non-negative integer combination of the primitive ray generators $(1,v_i)$. 
When $P$ is a simplex, the Hilbert basis consists of the ray generators $(1,v_i)$ and the set $\{h_1,\ldots,h_m\}$ of minimal (additive) generators of lattice points in $\Pi$. 
We associate a variable $V_i$ to each $(1,v_i)$ and a variable $x_i$ to each $h_i$. 
This defines a surjective degree map $\deg(\cdot)$ from the set of monomials of $K[V_1,\dots,V_{d+1},x_1,\dots,x_m]$ onto $\Lambda$ by 
\[
\deg\left(\prod V_i^{s_i}\cdot\prod x_j^{r_j}\right)=\sum s_i(1,v_i) + \sum r_jh_j \, .
\]
Extending $\deg(\cdot)$ $K$-linearly, the semi-group ring $\E(P)=K[\Lambda]$ then has a presentation \[0\longrightarrow I\longrightarrow K[V_1,\dots,V_{d+1},x_1,\dots,x_m]\longrightarrow \E(P)\longrightarrow 0,\] where the toric ideal $I$ is generated by all binomials 
\[
\mathbf{V}^{u_V}\mathbf{x}^{u_x}-\mathbf{V}^{w_V}\mathbf{x}^{w_x}
\]
such that $\deg\left(\mathbf{V}^{u_V}\mathbf{x}^{u_x})=\deg(\mathbf{V}^{w_V}\mathbf{x}^{w_x}\right)$. 

\begin{definition}
\label{def:fpa}
The \emph{Fundamental Parallelepiped Algebra (FPA)} of a simplex $P$ with toric ideal $I$ is the quotient algebra 
\begin{align*}
\widehat{R}:= & K[\Lambda]/(e_{(1,v_i)}:i=1,\ldots,d+1) \\
 \cong  & K[V_1,\dots,V_{d+1},x_1,\dots,x_m]/(I+(V_i:i=1,\ldots,d+1))\, .
\end{align*}
\end{definition}
$\widehat{R}$ is finite dimensional as a vector space, with $K$-basis $\{e_z\,:\,z\in\Z^{d+1}\cap\Pi\}$. 
Letting $\varphi$ be the projection $V_i\mapsto0$ from $K[V_1,\dots,V_{d+1},x_1,\dots,x_m]$ onto $K[x_1,\dots,x_m]$, we see that $\widehat{R}$ is isomorphic to $K[x_1,\dots,x_m]$ modulo the ideal $J:=\varphi(I)$. 
The ideal $J$ is not itself toric, as it has both binomial and monomial generators. 
The binomial generators ${\bf{x}}^u-{\bf{x}}^w$ in $J$ correspond to additive identities $\deg({\bf{x}}^u)=\deg({\bf{x}}^w)\in\Pi$. 
The monomial generators of $J$ lift to monomials of $K[V_1,\dots,V_{d+1},x_1,\dots,x_m]$ that are in the same equivalence class modulo $I$ as a monomial divisible by some $V_i$, and so correspond to elements of $\Lambda\backslash\Pi$. 

Our interest in the Fundamental Parallelepiped Algebra $\widehat{R}$ stems from the fact that the $V_i$'s form a linear system of parameters for  $K[V_1,\dots,V_{d+1},x_1,\dots,x_m]/I$.
By Proposition 3.3.5 of \cite{InfiniteFree}, we have that  
\begin{equation}\label{hatResult}
P_K^{\E(\Delta)}(z)=(1+z)^{d+1}\; P_K^{\widehat{R}}(z) \, .
\end{equation} 
Thus we may study the Poincar\'e series of the toric ring $\E(\Delta)$ by considering the ``simpler'' Artinian ring $\widehat{R}$.

\begin{example}\label{2^1Example}Computation in Macaulay2 gives that for the simplex $\Delta$ with vertices $(1,0)$, $(0,1)$, and $(-2,-3)$, $\cone(\Delta)$ has Hilbert basis (and associated variables) given by the columns below:
\[\bordermatrix{
    ~&V_1& V_2& V_3 & x_1&x_2&x_3&x_4\cr
  &1&1&1&1&1&1&1\cr
   &0&1&-2&0&0&-1&-1\cr
     &1&0&-3&0&-1&-1&-2
       }
\]
The associated toric ideal $I$ is given by \begin{align*}
  I=&(V_1x_2-x_1^2,V_2V_3-x_2x_4,V_2x_4-x_2^2,V_3x_2-x_4^2,V_1x_4-x_1x_3,\\&x_1x_4-x_2x_3,V_2x_3-x_1x_2,V_3x_1-x_3x_4,V_1V_3-x_3^2).
\end{align*} The Ehrhart ring $E(\Delta)$ is isomorphic to $K[V_1,V_2,V_3,x_1,x_2,x_3,x_4]/I$ and 
\[
\widehat{R} \cong  K[x_1,x_2,x_3,x_4]/(x_1^2,x_2x_4,x_2^2,x_3^2,x_4^2,x_1x_3,x_1x_4-x_2x_3,x_1x_2,x_3x_4) \, .
\]
\end{example}

The following family of simplices are the main objects under investigation in this work.

\begin{definition}
\label{def:simplex}
Let $\Delta_2^m$ be the $(m+1)$--simplex whose vertices are the standard basis vectors in $\R^{m+1}$ together with the point $(-2,\dots,-2,-2m-1)\in\R^{m+1}$. 
\end{definition}

The simplices $\Delta_2^m$ form a subfamily of the set of lattice simplices $\mathcal{Q}:=\{\Delta_{(1,\mathbf{q})}:\mathbf{q}\in \Z_{>0}^{d}\}$, where for a strictly positive integer vector $\mathbf{q}$ the simplex $\Delta_{(1,\mathbf{q})}$ is defined to be the convex hull of the standard basis vectors and $-\mathbf{q}$.
Simplices in $\mathcal{Q}$ have been the subject of extensive recent investigation~\cite{LaplacianSimplicesDigraphs,delta1QPaper,BraunDavisSolus,BraunHanely,BraunLiu,LiuSolusUnimodalityPositivity,SolusNumeralSystems}.
In particular, the simplices $\Delta_2^m$ were recently studied by Braun, Davis, and Solus~\cite{BraunDavisSolus} in the context of reflexive simplices having the integer decomposition property and also having a unimodal Ehrhart $h^*$-polynomial.

\begin{theorem}\label{thm:Rquotient}
Let $\widehat{R}$ denote the fundamental parallelepiped algebra for $\Delta_2^m$.
The following isomorphism holds for all $m\geq 1$:
\[
\widehat{R}\cong K[x_1,x_2,x_3,x_4]/(x_1^2,x_2^2,x_3^2,x_4^{m+1},x_1x_2,x_1x_3,x_2x_4^m,x_3x_4^m,x_2x_3-x_1x_4)
\] 
Further, the quotient algebra has a $K$-vector space basis given by the cosets represented by the elements of
\[
\{1,x_1,x_4^{\ell+1},x_1x_4^{\ell+1},x_2x_4^{\ell},x_3x_4^{\ell}\}_{0\leq\ell\leq m-1} \, .
\] 
\end{theorem}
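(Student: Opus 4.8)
The plan is to realize the asserted isomorphism concretely through a comparison homomorphism and then to pin it down with a dimension count. Recall from the discussion following Definition~\ref{def:fpa} that $\widehat{R}$ has $K$-basis $\{e_z : z\in\Pi\cap\Z^{m+2}\}$, where $\Pi$ is the fundamental parallelepiped of $\Delta_2^m$, that $e_ze_{z'}=e_{z+z'}$ if $z+z'\in\Pi$ and $e_ze_{z'}=0$ otherwise, and that a point $z\in\Lambda$ lies in $\Pi$ exactly when its unique expansion $z=\sum_i a_i(1,v_i)$ in the frame of ray generators has $a_i<1$ for all $i$. First I would name the four lattice points of $\Pi$ that will serve as $\deg(x_1),\dots,\deg(x_4)$: extrapolating from the case $m=1$ in Example~\ref{2^1Example}, take $h_1=(1,-1,\dots,-1,-m)$, $h_2=(1,0,\dots,0,0)$, $h_3=(1,-1,\dots,-1,-m-1)$, $h_4=(1,0,\dots,0,-1)$ in $\Z^{m+2}$, and confirm each lies in $\Pi$ by writing out its coefficient vector. (In fact $h_1,\dots,h_4$ are precisely the four degree-one lattice points of $\Pi$, which is the structural reason the fundamental parallelepiped algebra here has exactly four generators.) Let $\psi\colon K[x_1,x_2,x_3,x_4]\to\widehat{R}$ be the $K$-algebra homomorphism with $\psi(x_i)=e_{h_i}$, and write $A'$ for the quotient on the right-hand side of the claimed isomorphism.

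Next I would check that $\psi$ annihilates each of the nine listed generators, so that it descends to $\bar\psi\colon A'\to\widehat{R}$. For the binomial $x_2x_3-x_1x_4$ this is the coordinate identity $h_2+h_3=h_1+h_4$. For each of the eight monomial generators it comes down to showing that a specific non-negative combination of the $h_i$ is not in $\Pi$: the unique expansions of $2h_1$, $2h_2$, $h_1+h_2$, $h_1+h_3$, $2h_3$, $(m+1)h_4$, $h_2+mh_4$, and $h_3+mh_4$ each have some coefficient $\ge 1$. I would then evaluate $\bar\psi$ on the proposed spanning set $S$, checking $\psi(x_4^k)=e_{kh_4}$ for $0\le k\le m$, $\psi(x_1x_4^k)=e_{h_1+kh_4}$ for $0\le k\le m$, $\psi(x_2x_4^k)=e_{h_2+kh_4}$ for $0\le k\le m-1$, and $\psi(x_3x_4^k)=e_{h_3+kh_4}$ for $0\le k\le m-1$; the content each time is that the relevant lattice points remain inside $\Pi$, verified again via coefficient vectors. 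The $4m+2$ lattice points so obtained are pairwise distinct---separate them first according to whether the middle $m$ coordinates are all $0$ or all $-1$, then within each family by the degree and the last coordinate---so $\bar\psi(S)$ is a linearly independent set of $4m+2$ basis elements of $\widehat{R}$, and in particular $S$ is linearly independent in $A'$.

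On the polynomial side the standard normal-form reduction in $A'$ completes the count: $x_1^2=x_2^2=x_3^2=0$ and $x_1x_2=x_1x_3=0$ force a reduced monomial to involve at most one of $x_1,x_2,x_3$ and only to the first power; $x_2x_3=x_1x_4$ rewrites the remaining mixed monomials; and $x_4^{m+1}=0$, $x_2x_4^m=0$, $x_3x_4^m=0$ cap the exponent of $x_4$. Thus $S$ spans $A'$, so $\dim_K A'\le 4m+2$. Finally $\dim_K\widehat{R}=\lvert\Pi\cap\Z^{m+2}\rvert$ equals the normalized volume of $\Delta_2^m$, which a short computation---e.g.\ the matrix-determinant lemma applied to $I_{m+1}+w\mathbf{1}^{\mathsf{T}}$ with $w=(2,\dots,2,2m+1)^{\mathsf{T}}$---evaluates to $4m+2$. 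Hence $\bar\psi(S)$, being $4m+2$ linearly independent vectors in $\widehat{R}$, is a basis of $\widehat{R}$, so $\bar\psi$ is surjective; together with $\dim_K A'\le 4m+2=\dim_K\widehat{R}$ this makes $\bar\psi$ an isomorphism and forces $\dim_K A'=4m+2$. Consequently $S$, which spans $A'$ and has exactly $\dim_K A'$ elements, is the asserted $K$-basis.

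The substantive part---and the place an error would be easiest to make---is the uniform-in-$m$ bookkeeping behind the two middle paragraphs: pinning down the points $h_i$ and deciding exactly which non-negative combinations $ah_1+bh_2+ch_3+dh_4$ do and do not lie in $\Pi$. I would organize this as a single lemma determining, for every $n$ and every value of the last coordinate, when a lattice point of the form $(n;0,\dots,0;\ast)$ or $(n;-1,\dots,-1;\ast)$ lies in $\Pi$; each verification used above---both the vanishing of the nine relations and the non-vanishing of the monomials in $S$---then follows immediately.
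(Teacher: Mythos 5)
Your proposal is correct and follows essentially the same route as the paper's proof: the same algebra map on the four degree-one lattice points of $\Pi$ (up to your harmless relabeling swapping $x_2$ and $x_3$), the same nine membership/non-membership checks in $\Pi$, and the same concluding dimension count $4m+2=4m+2$. The only difference is that you verify directly (explicit coordinates, a determinant for the normalized volume, and enumeration of all $4m+2$ points as sums of the $h_i$) what the paper imports from \cite{BraunDavisSolus}, namely the parameterization $z_b$ of $\Pi\cap\Z^{m+2}$ and the integer decomposition property used for surjectivity.
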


\begin{proof}
We first describe the fundamental parallelepiped $\Pi$ for $\Delta_2^m$ and identify additive relations among the generators of the lattice points in it.
As shown in~\cite{BraunDavisSolus},  lattice points in $\Pi$ are parameterized by integers $b$ in $[0,4m+1]$, with each $b$ corresponding to the lattice point
\[
z_b:=\begin{bmatrix} b-m\lfloor \frac{b}{2m+1}\rfloor-\lfloor b/2\rfloor\\-\lfloor \frac{b}{2m+1}\rfloor\\\vdots\\-\lfloor \frac{b}{2m+1}\rfloor\\-\lfloor b/2\rfloor\\\vdots\\-\lfloor b/2\rfloor\end{bmatrix}\in\Pi \, .
\]  

Considering the cases $b<2m+1$ and $b\geq 2m+1$, and then considering the parity of $b$, we see that for each choice $1\leq h\leq m$ of zeroth coordinate, we get exactly four solutions (presented as column vectors below):
\[\bordermatrix{
    ~&z_{2h-1}& z_{2h}       & z_{2(m+h)-1} & z_{2(m+h)}\cr
  &h&h&h&h\cr
   &0&0&-1&-1\cr
     &\vdots&\vdots&\vdots&\vdots\cr
   &0&0&-1&-1\cr
    &-h+1&-h&-(m+h)+1&-(m+h)\cr
     &\vdots&\vdots&\vdots&\vdots\cr
     &-h+1&-h&-(m+h)+1&-(m+h)
       }.\]

By Theorem 4.1 of \cite{BraunDavisSolus}, the simplex $\Delta_2^m$ has the integer decomposition property, implying that $\widehat{R}$ is generated by elements  with zeroth coordinate equal to 1, i.e., $e_{z_1}$, $e_{z_2}$, $e_{z_{2m+1}}$, and $e_{z_{2m+2}}$. 
Thus we may assume without loss of generality that additive identities in the fundamental parallelepiped have the form $z_b+z_{b'}=z_c+z_{c'}$, where $z_b$ and $z_c$ have zeroth coordinate equal to 1 and the zeroth coordinate of $z_{b'}$ and $z_{c'}$ is $h$. 
It follows by inspection that every such identity is of the form $z_1+z_{2(m+h)}=z_2+z_{2(m+h)-1}$ for some $h$ between 2 and $m$. Every such identity  may be written as 
\[
(h-1)z_2+(z_1+z_{2m+2})=(h-1)z_2+(z_2+z_{2m+1})\, ,
\] 
 and is therefore a consequence of the primitive additive identity 
\[
z_1+z_{2m+2}=z_2+z_{2m+1}\, .
\]

Now that we have a better understanding of the structure of $\widehat{R}$, we get close to our desired isomorphism by constructing the map
\[
\psi:K[x_1,x_2,x_3,x_4]/(x_1x_4-x_2x_3)\to\widehat{R}= K[\Lambda]/\left(e_{(1,v_i)}\right)
\]
 defined by algebraically extending the map on variables given by 
\[
x_1\mapsto e_{z_{2m+1}},\quad x_2\mapsto e_{z_{2m+2}},\quad x_3\mapsto e_{z_1},\quad x_4\mapsto e_{z_2} \, .
\]
To verify that $\psi$ is well-defined, consider a pair of monomials $\prod_i x_i^{s_i}$ and $\prod_j x_j^{t_j}$ that are in the same equivalence class. 
Then $(\prod_i x_i^{s_i})-(\prod_j x_j^{t_j})$ is in the ideal $(x_1x_4-x_2x_3)$, so that $(\prod_i x_i^{s_i})-(\prod_j x_j^{t_j})=t(x_1x_4-x_2x_3)$ for some $t$. 
It follows that $\psi(\prod_i x_i^{s_i})-\psi(\prod_j x_j^{t_j})=\psi(t)(e_{z_{2m+1}+z_2}-e_{z_{2m+2}+z_1})$ is zero, since, as we have seen,  $z_1+z_{2m+2}=z_2+z_{2m+1}$.
 It is straightforward to verify that the homomorphism $\psi$ is surjective.

We next determine the kernel of $\psi$.
Observe that since $2z_1$ is not among $z_3$, $z_4$, $z_{2m+3}$, and $z_{2m+4}$, we can conclude that $2z_1$ is not in $\Pi$. 
We can similarly conclude that $2z_{2m+1}$, $2z_{2m+2}$, $z_1+z_{2m+1}$, and $z_{2m+1}+z_{2m+2}$ are not in $\Pi$. 
We additionally see that $z_{4m+1}=mz_2+z_{2m+1}$, so that $mz_2+z_{1}$, $mz_2+z_{2m+2}$, and $(m+1)z_2$ are not in $\Pi$, since $\Pi$ contains a unique element with zeroth coordinate equal to $m+1$. 
Since $z_1+z_1$ is an element of $\Lambda$ but not $\Pi$, we conclude that $z_1+z_1=v_i+z$ for some $z$ in $\Lambda$. 
Thus $e_{z_1+z_1}=e_{z_1}^2=0$ in $\widehat{R}$. 
Similarly \[e_{z_1}^2=e_{z_1}e_{z_{2m+1}}=e_{z_{2m+1}}^2=e_{z_{2m+1}}e_{z_{2m+2}}=e_{z_{2m+2}}^2=e_{z_1}e_{z_2}^m=e_{z_{2m+2}}e_{z_2}^m=e_{z_2}^{m+1}=0\] in $\widehat{R}$. 
Thus the kernel of $\psi$ contains $(x_3^2,x_1x_3,x_1^2,x_1x_2,x_2^2,x_3x_4^m,x_2x_4^m,x_4^{m+1})$.

Finally, we count equivalence classes of monomials in the ring 
\[
K[x_1,x_2,x_3,x_4]/(x_1^2,x_2^2,x_3^2,x_4^{m+1},x_1x_2,x_1x_3,x_2x_4^m,x_3x_4^m,x_2x_3-x_1x_4) \, .
\] 
We only need to consider the monomials 1 and variables multiplied by powers of $x_4$, since $x_ix_j$ is either zero or equal to $x_4r_k$ for some $k$. 
It follows that it is a $(4m+2)$--dimensional $K$-vector space with basis 
\[
\{1,x_1,x_4^{\ell+1},x_1x_4^{\ell+1},x_2x_4^{\ell},x_3x_4^{\ell}\}_{0\leq\ell\leq m-1}
,
\] 
and with a surjective ring homomorphism $\hat{\psi}$ to the $(4m+2)$--dimensional vector space $\widehat{R}$, i.e., $\hat{\psi}$ is a ring isomorphism from 
\[
K[x_1,x_2,x_3,x_4]/(x_1^2,x_2^2,x_3^2,x_4^{m+1},x_1x_2,x_1x_3,x_2x_4^m,x_3x_4^m,x_2x_3-x_1x_4)
\] 
to the Fundamental Parallelepiped Algebra $\widehat{R}$. 
\end{proof}

As Koszul algebras must be defined by quadratically-generated ideals, the following corollary is immediate.

\begin{corollary}
For $m\geq 2$, $\widehat{R}$ is not Koszul for $\Delta_2^m$.
\end{corollary}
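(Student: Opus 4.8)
The plan is to reach a contradiction with the Koszul functional equation~\eqref{eqn:koszul}, feeding it the Poincar\'e series already computed in Theorem~\ref{mainTheorem} together with the explicit $K$-basis of $\widehat{R}$ from Theorem~\ref{thm:Rquotient}. First I would fix the standard grading on $\widehat{R}$, in which each $x_i$ has degree $1$; this is legitimate because the defining ideal of Theorem~\ref{thm:Rquotient} is homogeneous for it, and in fact it is the specialization of the fine $\Lambda$-grading obtained by recording the zeroth coordinate, since each of $\deg(x_1),\dots,\deg(x_4)$ has zeroth coordinate $1$. Counting the basis elements $\{1,x_1,x_4^{\ell+1},x_1x_4^{\ell+1},x_2x_4^{\ell},x_3x_4^{\ell}\}_{0\le\ell\le m-1}$ by degree then gives
\[
H_{\widehat{R}}(z)=1+4z+4z^2+\cdots+4z^m+z^{m+1},
\]
since $1$ is the only degree-$0$ element, $x_1,x_2,x_3,x_4$ are the four degree-$1$ elements, $x_4^{k},x_1x_4^{k-1},x_2x_4^{k-1},x_3x_4^{k-1}$ give exactly four elements in each degree $2\le k\le m$, and $x_1x_4^{m}$ is the single top element, in degree $m+1$.

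Next I would specialize the finely graded formula of Theorem~\ref{mainTheorem} at $\boldY\mapsto(1,\dots,1)$. Using the factorization $1-3z-3z^2+z^3=(1+z)(1-4z+z^2)$, the numerator factor $1+z$ cancels and one gets the (notably $m$-independent) expression
\[
P_K^{\widehat{R}}(z)=\frac{1+z}{1-3z-3z^2+z^3}=\frac{1}{1-4z+z^2}.
\]
Were $\widehat{R}$ Koszul, \eqref{eqn:koszul} would force $H_{\widehat{R}}(-z)=1/P_K^{\widehat{R}}(z)=1-4z+z^2$, that is, $H_{\widehat{R}}(z)=1+4z+z^2$. This matches the Hilbert series displayed above exactly when $m=1$, but for $m\ge 2$ the coefficient of $z^2$ in $H_{\widehat{R}}(z)$ equals $4\ne 1$, so no such identity can hold. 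That contradiction finishes the proof.

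The argument is short precisely because the substantive content lives in Theorems~\ref{thm:Rquotient} and~\ref{mainTheorem}; the one point needing genuine care is the first step, namely checking that the grading governing Koszulness (the standard grading) is compatible with both the Hilbert-series count and the specialization of the multigraded Poincar\'e series, so that~\eqref{eqn:koszul} is applied to the right invariants. As an alternative route that sidesteps Theorem~\ref{mainTheorem} altogether, I could instead observe directly that $\widehat{R}$ is not a quadratic algebra: its defining ideal $J$ contains $x_4^{m+1}$, which has degree $m+1\ge 3$ when $m\ge 2$, yet $x_4^{m+1}$ does not lie in the ideal generated by the degree-two part of $J$, because every element of that smaller ideal is a $K$-linear combination of monomials each divisible by one of $x_1,x_2,x_3$ (the sole non-monomial quadratic generator $x_2x_3-x_1x_4$ has both terms divisible by one of $x_1,x_2,x_3$, a property preserved under multiplication by monomials), while $x_4^{m+1}$ is not. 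Since Koszul algebras are quadratic, this again gives the conclusion.
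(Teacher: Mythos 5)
Your proof is correct. The paper itself gives no explicit argument for this corollary; the intended justification is evidently read off directly from the minimal resolution constructed in Theorem~\ref{mainTheorem} (already in the base case $F_{\leq 2}$ of Example~\ref{complexExample}, the generator $\epsilon_{15}$ sits in homological degree $2$ but internal degree $m+1$, so $\beta_{2,\,m+1}^{\widehat{R}}(K)\neq 0$, whereas a Koszul algebra has $\Tor_2$ concentrated in internal degree $2$). Your first route instead compares the specialized Poincar\'e series $1/(1-4z+z^2)$ with the Hilbert series $1+4z+4z^2+\cdots+4z^m+z^{m+1}$ via \eqref{eqn:koszul}; the computation and the cancellation $1-3z-3z^2+z^3=(1+z)(1-4z+z^2)$ are right, and you correctly note that the coefficient of $z^2$ only obstructs for $m\geq 2$, matching the hypothesis. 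This route is legitimate but leans on the full strength of Theorem~\ref{mainTheorem}. Your second route --- observing that $x_4^{m+1}$ is a minimal generator of $J$ of degree $m+1\geq 3$ not lying in the ideal generated by $J_2$, since every element of $(J_2)$ is a combination of monomials divisible by $x_1$, $x_2$, or $x_3$ --- is the most economical: it needs only Theorem~\ref{thm:Rquotient} and the fact that Koszul algebras are quadratic, and it makes transparent why $m=1$ is excluded (there the ideal is generated by quadrics and the functional equation is in fact satisfied). Either argument is a valid replacement for the proof the paper omits.
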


\section{A Weighted Tree Encoding Betti Numbers}\label{sec:tree}
Our goal in this section is to define for $\Delta_2^m$ a $\Lambda$--weighted tree $T$ whose weighted rank generating function 
\[
T(z,\boldY):=\sum_{\epsilon\in T}z^{\rank(\epsilon)}\boldY^{\deg(\epsilon)}
\]
is equal to the $(\Lambda\times\N)$--graded Poincar\'e series 
\[
P_K^{\widehat{R}}(z,{\bf{y}}):=\sum_{\alpha\in\Lambda}\sum_{i\geq0}\dim_K\Tor_{i,\alpha}^{\widehat{R}}(K,K)z^i\boldY^\alpha \, ,
\] 
where $\boldY^\alpha$ means the multinomial $y_0^{\alpha_0}\cdots y_n^{\alpha_n}$.
To construct our weighted tree, we require the following general construction.

\begin{definition}\label{def:LT}
Let $P$ be a lattice simplex with fundamental parallelepiped algebra $\widehat{R}$ described as a quotient of a polynomial ring with a monomial term order $\prec_{\widehat{R}}$.
Assume that we have a \emph{distinguished monomial basis} for $\widehat{R}$ consisting of all monomials outside the $\prec_{\widehat{R}}$-leading term ideal for the defining ideal of $\widehat{R}$.
Let $d$ be a map between free finitely generated $\Lambda$-graded $\widehat{R}$-modules $M$ and $N$, where there is an ordering $\prec$ on the generators of $N$. 
Consider a generator $\epsilon$ of $M$, and let $\delta$ be the {$\prec$-minimal} support of $d(\epsilon)$ and $s$ the $\prec_{\widehat{R}}$-maximal monomial of $d(\epsilon)$ supported on $\delta$. 
If $\delta s$ is distinct for each $\epsilon$, then we say that $M$ \emph{can be ordered with respect to $d$}.
If $M$ can be ordered with respect to $d$, we define an ordering of the generators of $M$ as follows: $\epsilon\prec\epsilon'$ if $\delta\prec\delta'$ or if $\delta=\delta'$ and $s'\prec_{\widehat{R}} s$.
In this case, we define the \emph{leading term} map $\LT(\cdot)$ on the graded components of $M$ which projects each element onto the summand generated by its $\prec$-minimal support. 
For notational convenience, we define the \emph{leading coefficient} $\LC(\cdot)$ of an element to be the $\prec_{\widehat{R}}$-maximal monomial of its leading term. 
\end{definition}

For a given complex $(F,d)$ we denote by $F_{\leq n}$ the truncated complex 
\[
F_{\leq n}\;:\quad F_0\xleftarrow{d_1} F_1\xleftarrow{d_2} \cdots\xleftarrow{d_n} F_n \, .
\]
Observe that for a $\Lambda$-graded complex $F$ of free finitely generated $\widehat{R}$-modules, if $\LT(\cdot)$ is defined for the truncated complex $F_{\leq n}$ and the leading terms of $d_{n+1}(\epsilon)$ for generators $\epsilon$ of $F_{n+1}$ are all distinct, then $F_{n+1}$ can be ordered with respect to $d_{n+1}$.
In this case, we may define $\LT(\cdot)$ on $F_{n+1}$. 

\begin{definition}\label{def:Rorder}
For $\widehat{R}$ corresponding to $\Delta_2^m$ as given in Theorem~\ref{thm:Rquotient}, specify the ordering $\prec_{\widehat{R}}$ of the monomial $K$-basis by using the lexicographic order induced by the ordering
\[
1\prec_{\widehat{R}} x_1\prec_{\widehat{R}} x_2\prec_{\widehat{R}} x_3\prec_{\widehat{R}} x_4 \, ,
\] 
i.e. on our basis elements we have $x_ix_4^j\prec_{\widehat{R}} x_kx_4^\ell$ if $j<\ell$ or  $j=\ell$ and $x_i\prec_{\widehat{R}} x_k$. 
\end{definition}

\begin{example}
Let our simplex be $\Delta_2^m$ with $\widehat{R}$ as given in Theorem~\ref{thm:Rquotient}.
Consider the complex $F_{\leq2}$ below:
\[F_{\leq2}\;:\quad
\widehat{R}
 \xleftarrow[d_1]{\qquad}
    \widehat{R}^4 
    \xleftarrow[d_2]{\qquad}
    \widehat{R}^{15} \]  
where the map $d_1= \left( \begin{array}{cccc}
        x_1&x_2&x_3&x_4
    \end{array}\right)$ sends each $\delta_i\to x_i$
and $d_2$ is given by the matrix
\[
\bordermatrix{
~&\epsilon_1&\epsilon_2&\epsilon_3&\epsilon_4&\epsilon_5&\epsilon_6&\epsilon_7&\epsilon_8&\epsilon_9&\epsilon_{10}&\epsilon_{11}&\epsilon_{12}&\epsilon_{13}&\epsilon_{14}&\epsilon_{15}\cr 
\delta_1&x_1&x_2&x_3&x_4& 0&0&0&0&        0&0&0&0&0&0&0\cr
\delta_2&0&0&0&0&        x_1&x_2&x_3&x_4& 0&0&0&0&0&0&0\cr
\delta_3&0&0&0&0&        0&0&0&0&         x_1&x_2&x_3&x_4&0&0&0\cr
\delta_4&0&0&0&-x_1&     0&0&-x_1&-x_2 &     0&-x_1&0&-x_3&x_2x_4^{m-1}&x_3x_4^{m-1}&x_4^m} \, .
\] 
We see that $F_1$ can be ordered with respect to $d_1$, with the result that $\delta_1\prec \delta_2\prec \delta_3\prec \delta_4$.
Further, it is straightforward to verify that $F_2$ can be ordered with respect to $d_2$, and hence the leading term of the element $d_2(\epsilon_4)$ of $F_1$ is $\LT(d_2(\epsilon_4))=x_4\delta_1$ and the leading coefficient is $\LC(d_2(\epsilon_4))=x_4$.
\label{complexExample}
\end{example}

\begin{construction}\label{con:Rtree}
As in Definition~\ref{def:LT}, assume $P$ is a lattice simplex with fundamental parallelepiped algebra $\widehat{R}$ described as a quotient of a polynomial ring with a monomial term order $\prec_{\widehat{R}}$, together with a distinguished monomial basis.
Assume that $F$ is a resolution of a module $M$ over $\widehat{R}$ such that $F_n$ can be ordered with respect to $d_n$ and the order on $F_n$ is defined in this manner, with associated maps $\LT$ and $\LC$.
Construct a $\Lambda$-weighted tree $T$ whose elements are the generators of the summands of $F$, and whose cover relations are given by $\epsilon \gtrdot \delta$ if \,$\LT(d(\epsilon))=s\delta$. 
This also defines a labeling $\eta$ of the cover relations of $T$ where $\eta(\epsilon,\delta):=\LC(d(\epsilon))=s\in\widehat{R}$ (by construction a monomial).
\end{construction}
Note that if $F_0$ is cyclic, then $T$ is ranked, with the rank of an element equal to the graph distance between an element and the root of the tree in the Hasse diagram. 
For each element $\epsilon$ in $T$, there is a unique path $\hat{0}=t_0\lessdot t_1\lessdot\cdots\lessdot t_\ell=\epsilon$, where $\hat{0}$ is the generator of $F_0$. 
We define the degree of $\epsilon$ in $T$ to be 
\begin{equation}\label{degree}
\deg(\epsilon)=\sum_{i=0}^{\ell-1} \deg(\eta(t_i,t_{i+1}))\in\Lambda \, .
\end{equation}
This definition agrees with the internal degree of the summand generated by $\epsilon$, and the length $\ell$ of the chain from $\hat{0}$ to $\epsilon$ is precisely the homological degree where the summand sits. 
Thus there is a degree preserving bijection between summands of the complex $F$ and elements of $T$, so that $\displaystyle T(z,\boldY)=\sum_{\epsilon\in T}z^{\rank(\epsilon)}\boldY^{\deg(\epsilon)}$ is equal to 
\[
F(z,\boldY)=\sum_{k\geq0}\sum_{\alpha\in\Lambda}\rank \left[F_k\right]_\alpha z^k\boldY^\alpha \, .
\]
\begin{example} 
For the complex $F_{\leq2}$ of Example~\ref{complexExample}, the tree $T_{\leq2}$ is depicted in Figure~\ref{fig:tree}. 
Example~\ref{complexExample} implies that the cover label $\eta(\delta_1,\gamma)$ is equal to $x_1$ and the cover label $\eta(\epsilon_2,\delta_1)$ is equal to $x_2$, thus $\deg(\epsilon_2)$ is equal to $\deg(\eta(\delta_1,\gamma))+\deg(\eta(\epsilon_2,\delta_1))=\deg(x_1)+\deg(x_2)$.
After making a similar argument for each basis element in $F_{\leq 2}$, it follows that the generating function $T_{\leq2}(z,\boldY)$ is given by 
\[
T_{\leq2}(z,\boldY)=1+z(\boldY^{\deg(x_1)}+\boldY^{\deg(x_2)}+\boldY^{\deg(x_3)}+\boldY^{\deg(x_4)})+z^2(\boldY^{\deg(x_1)+\deg(x_1)}+\cdots+\boldY^{\deg(x_4)+\deg(x_4)}) \, .
\]

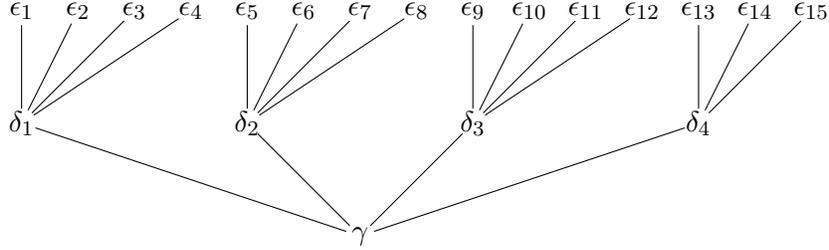
\begin{figure}[ht]
\begin{tikzpicture}[scale=.75]
\draw (6,-2) -- (0,0)--(0,2); 
\draw (1,2) -- (0,0)--(2,2); 
\draw (6,-2) -- (4,0)--(4,2); 
\draw (5,2) -- (4,0)--(6,2); 
\draw (6,-2) -- (8,0)--(8,2); 
\draw (9,2) -- (8,0)--(10,2); 
\draw (6,-2) -- (12,0)--(12,2); 
\draw (13,2) -- (12,0)--(14,2);
\draw (3,2) -- (0,0); 
\draw (7,2) -- (4,0); 
\draw (11,2) -- (8,0); 
\draw[white,fill=white] (6,-2) circle [radius=0.25];
\node at (6,-2) {$\gamma$};

\draw[white,fill=white] (4,0) circle [radius=0.25];
\node at (4,0) {$\delta_2$};
\draw[white,fill=white] (8,0) circle [radius=0.25];
\node at (8,0) {$\delta_3$};
\draw[white,fill=white] (12,0) circle [radius=0.25];
\node at (12,0) {$\delta_4$};
\draw[white,fill=white] (0,0) circle [radius=0.25];
\node at (0,0) {$\delta_1$};

\draw[white,fill=white] (0,2) circle [radius=0.25];
\node at (0,2) {$\epsilon_1$};
\draw[white,fill=white] (1,2) circle [radius=0.25];
\node at (1,2) {$\epsilon_2$};
\draw[white,fill=white] (2,2) circle [radius=0.25];
\node at (2,2) {$\epsilon_3$};
\draw[white,fill=white] (3,2) circle [radius=0.25];
\node at (3,2) {$\epsilon_4$};
\draw[white,fill=white] (4,2) circle [radius=0.25];
\node at (4,2) {$\epsilon_5$};
\draw[white,fill=white] (5,2) circle [radius=0.25];
\node at (5,2) {$\epsilon_6$};
\draw[white,fill=white] (6,2) circle [radius=0.25];
\node at (6,2) {$\epsilon_7$};
\draw[white,fill=white] (7,2) circle [radius=0.25];
\node at (7,2) {$\epsilon_8$};
\draw[white,fill=white] (8,2) circle [radius=0.25];
\node at (8,2) {$\epsilon_9$};
\draw[white,fill=white] (9,2) circle [radius=0.25];
\node at (9,2) {$\epsilon_{10}$};
\draw[white,fill=white] (10,2) circle [radius=0.25];
\node at (10,2) {$\epsilon_{11}$};
\draw[white,fill=white] (11,2) circle [radius=0.25];
\node at (11,2) {$\epsilon_{12}$};
\draw[white,fill=white] (12,2) circle [radius=0.25];
\node at (12,2) {$\epsilon_{13}$};
\draw[white,fill=white] (13,2) circle [radius=0.25];
\node at (13,2) {$\epsilon_{14}$};
\draw[white,fill=white] (14,2) circle [radius=0.25];
\node at (14,2) {$\epsilon_{15}$};
\end{tikzpicture}
\caption{The tree $T_{\leq2}$ for Example~\ref{complexExample}}
\label{fig:tree}
\end{figure}
\end{example}

The following lemma gives a sufficient condition for $F(z,\boldY)$ to be a rational function.

\begin{lemma}\label{matrixLemma}
Assume the setting of Construction~\ref{con:Rtree} and let $\{\lambda_i\}_{i\in[n]}$ denote the subset of elements of the distinguished monomial basis of $\widehat{R}$ that appear as labels in $T$.
Let the associated $\eta$-labeled tree $T$ have the property that the multiset $\left\{\eta(\epsilon,\delta):\,\epsilon\gtrdot\delta\right\}$ depends only on $\LC(d(\delta))$, i.e. for $\delta$ with $\LC(d(\delta))=\lambda_j$, there exists exactly $a_{i,j}$ elements $\epsilon$ in $T$ with $\eta(\epsilon,\delta)=\lambda_i$ (note that by hypothesis $a_{i,j}$ is either zero or one). 
Let $A$ be the $n\times n$ matrix with entries $A_{i,j}=a_{i,j}z\boldY^{\deg(\lambda_i)}$.
Then the generating function
\[
F(z,\boldY)=\sum_{k\geq0}\sum_{\alpha\in\Lambda}\rank \left[F_k\right]_\alpha z^k\boldY^\alpha
\] 
has a rational representation of the form 
\[
F(z,\boldY)=\frac{f(z,\boldY)}{\chi(z,\boldY,1)} \, ,
\] 
where $\chi(z,\boldY,t)$ is the characteristic polynomial of the matrix $A$ and the $z$-degree of $f(z,\boldY)$ is at most that of $\chi(z,\boldY,1)$. 
\end{lemma}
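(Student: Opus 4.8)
The plan is to encode the self‑similar structure of $T$ recorded by the matrix $A$ as a linear system for a vector of generating functions, solve that system by Cramer's rule, and identify the resulting common denominator with $\chi(z,\boldY,1)$. For each $i\in[n]$ set
\[
N_i(z,\boldY):=\sum_{\substack{\delta\in T\setminus\{\hat{0}\}\\ \LC(d(\delta))=\lambda_i}} z^{\rank(\delta)}\boldY^{\deg(\delta)},
\]
and let $b_i$ be the number of elements $\epsilon$ of $T$ covering the root $\hat{0}$ with $\eta(\epsilon,\hat{0})=\lambda_i$ — equivalently, the number of rank‑one elements $\epsilon$ with $\LC(d(\epsilon))=\lambda_i$. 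By the hypothesis on $T$, an element $\delta$ with $\LC(d(\delta))=\lambda_j$ has exactly $a_{i,j}$ children carrying cover label $\lambda_i$, and by Construction~\ref{con:Rtree} each such child $\epsilon$ satisfies $\rank(\epsilon)=\rank(\delta)+1$, $\deg(\epsilon)=\deg(\delta)+\deg(\lambda_i)$, and $\LC(d(\epsilon))=\lambda_i$. Expanding each $N_i$ by distinguishing the summands indexed by an element that covers the root from those indexed by an element covering a non‑root element — and recalling $F(z,\boldY)=T(z,\boldY)$, which equals $1+\sum_i N_i$ since the root contributes $1$ and every non‑root element lies in a unique class $N_i$ — one obtains, for every $i$,
\[
N_i \;=\; z\,\boldY^{\deg\lambda_i}\Bigl(b_i+\sum_{j=1}^{n} a_{i,j}N_j\Bigr)\qquad\text{and}\qquad F(z,\boldY)\;=\;1+\sum_{i=1}^{n}N_i.
\]

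Write $\mathbf N=(N_i)_i$, $\mathbf b=(b_i)_i$, $D=\operatorname{diag}\bigl(z\boldY^{\deg\lambda_1},\dots,z\boldY^{\deg\lambda_n}\bigr)$ and $M=(a_{i,j})$. The first family of identities is $(I-DM)\mathbf N=D\mathbf b$, which since $DM=A$ reads $(I-A)\mathbf N=D\mathbf b$; as $\det(I-A)$ has constant term $1$, $I-A$ is invertible over the relevant field of fractions and $\mathbf N=(I-A)^{-1}D\mathbf b$. Summing the coordinates of $\mathbf N$ and using the second identity,
\[
F(z,\boldY)\;=\;1+\mathbf{1}^{\mathsf T}(I-A)^{-1}D\mathbf b\;=\;\frac{\det(I-A)+\mathbf{1}^{\mathsf T}\operatorname{adj}(I-A)\,D\mathbf b}{\det(I-A)}.
\]
By definition $\chi(z,\boldY,t)=\det(tI-A)$, so the denominator is exactly $\chi(z,\boldY,1)$; taking $f:=\det(I-A)+\mathbf{1}^{\mathsf T}\operatorname{adj}(I-A)D\mathbf b$ — a polynomial in $z$ and a Laurent polynomial in $\boldY$, since $\operatorname{adj}(I-A)$ has polynomial entries — gives the asserted rational form $F=f/\chi(z,\boldY,1)$.

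What remains, and what I expect to be the main obstacle, is the estimate $\deg_z f\le\deg_z\chi(z,\boldY,1)$. My plan is a degree count in the Cramer expression. Writing $A=zA'$ with $A'=EM$ and $E=\operatorname{diag}(\boldY^{\deg\lambda_i})$, we have $\chi(z,\boldY,1)=\det(I-zA')=\sum_{k\ge0}(-z)^k e_k(A')$, so the coefficient of $z^k$ is a $\pm$‑combination of $k\times k$ minors of $A'$; using that distinct elements of the distinguished monomial basis have distinct $\Lambda$‑degrees, one wants to conclude that this coefficient is nonzero exactly for $0\le k\le\rank(A')$, whence $\deg_z\chi(z,\boldY,1)=\rank(A')$. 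For the numerator, let $B_i$ be $I-A$ with its $i$‑th column replaced by $D\mathbf b$, so $N_i=\det(B_i)/\det(I-A)$ and $f=\det(I-A)+\sum_i\det(B_i)$; expanding each $\det(B_i)$ by splitting every column into its $z$‑free and its $z$‑divisible parts, the coefficient of $z^k$ in $\det(B_i)$, hence in $f$, becomes a $\pm$‑combination of $k\times k$ minors of $A'$ and of the augmented matrix $[\,A'\mid E\mathbf b\,]$, and these all vanish once $k>\rank(A')$. The points that need care are: (i) that no cancellation among the monomials $\boldY^{\sum_{i\in S}\deg\lambda_i}$ lowers $\deg_z\chi(z,\boldY,1)$ below $\rank(A')$; and (ii) that appending the column $E\mathbf b$ does not raise the rank, i.e.\ that $\mathbf b$ lies in the column span of $M$. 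In the setting of Theorem~\ref{mainTheorem} this last fact holds because $\hat{0}$ has the same multiset of child‑labels as any vertex $\delta$ with $\LC(d(\delta))$ equal to the $\prec_{\widehat R}$‑least variable, which itself uses that this variable squares to zero in $\widehat R$. Equivalently, the desired estimate says that $F$, viewed as a rational function of $z$, is regular at $z=\infty$ with value $1$, and establishing this regularity is the heart of the argument.
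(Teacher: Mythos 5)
Your derivation of the rational form is correct, and it is the transfer-matrix counterpart of what the paper does. You package the self-similarity of $T$ as the linear system $(I-A)\mathbf N=D\mathbf b$ and solve it by Cramer's rule; the paper instead proves by induction that $\boldOne_n A^{k}B$ records the graded ranks of $F_{k+1}$ and then applies Cayley--Hamilton, $\chi(z,\boldY,A)=0$, left-multiplying by $\boldOne_n A^{k-1}$ and right-multiplying by $B$ to obtain a linear recurrence on the coefficients of $F(z,\boldY)$, which says exactly that $\chi(z,\boldY,1)\cdot F(z,\boldY)$ has no terms of $z$-degree greater than $n$. The two routes are equivalent in substance and both identify the denominator as $\chi(z,\boldY,1)=\det(I-A)$; yours has the advantage of a closed adjugate formula for the numerator.

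The genuine gap is the degree bound, which you flag but do not close. Note first that your Cramer expression already yields $\deg_z f\le n=\deg_t\chi$ for free (every entry of $I-A$ and of $D\mathbf b$ has $z$-degree at most one, so $\det(I-A)$ and each $\det(B_i)$ have $z$-degree at most $n$), and this is precisely the bound the paper's Cayley--Hamilton argument establishes. The sharper inequality $\deg_z f\le\deg_z\chi(z,\boldY,1)$ that you set out to prove is strictly stronger whenever $A$ is singular --- as it is in the application, where $n=7$ but $\deg_z\chi(z,\boldY,1)=3$ --- and it does not follow from the hypotheses of the lemma alone: if some $\lambda_i$ labels only leaves of $T$, the $i$-th column of $M$ vanishes and the bound can fail (e.g.\ a root with a single childless child gives $\chi(z,\boldY,1)=1$ but $F=1+z\boldY^{\deg\lambda_1}$). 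Your plan correctly isolates the two obstructions --- possible cancellation among the minors of $A'$, and whether $\mathbf b$ lies in the column span of $M$ --- but resolving them requires structural input about the resolution beyond what the lemma assumes, and your proposal does not supply it. So either prove the weaker bound $\deg_z f\le n$ (which is immediate from your formula and matches what the paper's proof actually delivers) and accept that the application then needs the resolution computed further out, or add and verify the hypotheses needed for the rank argument.
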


\begin{proof}
Let $b_{k,\alpha}^i$ be the number of rank $k$ elements $\epsilon$ of $T$ having degree $\alpha$ and with $\LC(d(\epsilon))=\lambda_i$, so that 
\[
\sum_{i=1}^n b_{k,\alpha}^i=\rank[F_k]_\alpha.
\] 
Define $B$ to be the $n\times1$ matrix whose $i$-th entry is $b_{1,\deg(\lambda_i)}^iz\boldY^{\deg(\lambda_i)}$.
Note that $b_{1,\deg(\lambda_i)}^i$ is equal to $1$ if $\lambda_i$ is equal to a single variable in $\widehat{R}$, and is equal to $0$ otherwise.

We prove by induction the claim that the matrix $\displaystyle A^{k}B$ is given by
\[
\left(A^{k}B\right)_i=\sum_{\alpha\in\Lambda}b_{k+1,\alpha}^iz^{k+1}\boldY^\alpha.
\] 
The base case $k=0$ is trivial. 
Assume the induction hypothesis and write $\displaystyle \left(A^{k}B\right)_i$ as 
\begin{align*}
\left(A^{k}B\right)_i&=\sum_{j=1}^nA_{i,j}\left(A^{k-1}B\right)_j\\
&=\sum_{j=1}^nA_{i,j}\left(\sum_{\alpha\in\Lambda}b_{k,\alpha}^jz^k\boldY^\alpha\right)\\
&=\sum_{\alpha\in\Lambda}\sum_{j=1}^nA_{i,j}b_{k,\alpha}^jz^k\boldY^\alpha\\
&=\sum_{\alpha\in\Lambda}\sum_{j=1}^na_{i,j}b_{k,\alpha}^jz^{k+1}\boldY^{\alpha+\deg(\lambda_i)}.
\end{align*} 
Observe that the coefficient of $z^{k+1}\boldY^\mu$ in the last line above is equal to $\sum_{j=1}^na_{i,j}b_{k,\mu-\deg(\lambda_i)}^j$, and that, by equation (\ref{degree}), the product $a_{i,j}b_{k,\mu-\deg(\lambda_i)}^j$ is precisely the number of elements $\epsilon$ of $T$ of degree $\mu$ and rank $k+1$ such that $\deg(\LC(d(\epsilon)))=\deg(\lambda_i)$. 
Thus 
\[
\left(A^{k}B\right)_i=\sum_{\mu\in\Lambda}b_{k+1,\mu}^iz^{k+1}\boldY^\mu,
\] completing the proof of the claim.

Defining $\boldOne_n$ to be the $1\times n$ matrix of 1's, note that \[\boldOne_n\cdot A^\ell\cdot B=\sum_{\alpha\in\Lambda}\rank[F_{\ell+1}]_\alpha \;z^{\ell+1}\boldY^\alpha.\]

Let $\chi\in K[z,\boldY,t]$ be the characteristic polynomial of $A$, so that $\chi(z,\boldY,A)=0$, and let \[\chi=t^d+\sum_{i=0}^{d-1}\chi_{i}t^{i},\] with $\displaystyle \chi_i=\sum_jq_{i,j}z^{r_{i,j}}\boldY^{s_{i,j}}$, where $q_{i,j}\in K$.  Then $\displaystyle A^d+\sum_{i=0}^{d-1}\sum_jq_{i,j}z^{r_{i,j}}\boldY^{s_{i,j}}A^{i}$ is the zero matrix. Left multiplying by $\boldOne_n A^{k-1}$ and right multiplying by $B$ yields \[\boldOne_nA^{d+k-1}B+\sum_{i=0}^{d-1}\sum_jq_{i,j}z^{r_{i,j}}\boldY^{s_{i,j}}\boldOne_n\cdot A^{i+k-1}\cdot B=0.\] 
Thus for all $k\geq1$,
\[
\sum_{\alpha\in\Lambda}\rank[F_{d+k}]_\alpha z^{d+k}\boldY^{\alpha}-\sum_{\mu\in\Lambda}\sum_{i=0}^{d-1}\sum_jq_{i,j}\rank[F_{i+k}]_{\mu}z^{i+k+r_{i,j}}\boldY^{\mu+s_{i,j}}=0.
\]
In particular, the coefficient of $z^{d+k}\boldY^\alpha$ on the left hand side is zero, so that 
\[
\rank[F_{d+k}]_\alpha+\sum_{i=0}^{d-1}\sum_jq_{i,j}\rank[F_{d+k-r_{i,j}}]_{\alpha-s_{i,j}}=0.
\] Since this is also the coefficient of $z^{d+k}\boldY^\alpha$ in the product $\displaystyle \chi(z,\boldY,1)\cdot F(z,\boldY)$, we see that this product is a power series in $K[[z,\boldY]]$ which vanishes in $z$-degree greater than $d$. Since $F_\ell$ is a finite direct sum for each $\ell$, the product is in fact a polynomial in $K[z,\boldY]$ and the result follows.
\end{proof}

\section{Rationality and $\Delta_2^m$} \label{sec:rational} 

In this section we prove the following theorem, our main result in this work.

\begin{theorem}\label{mainTheorem} 
For the simplex $\Delta_2^m$ with $\widehat{R}$ as given by Theorem~\ref{thm:Rquotient}, the $\widehat{R}$--module $K\cong \widehat{R}/(x_1,x_2,x_3,x_4)\widehat{R}$ has a minimal free resolution $F$ satisfying the hypotheses of Lemma~\ref{matrixLemma}.
The matrix $A$ resulting from Lemma~\ref{matrixLemma} in this case is given by
\begin{equation}\label{eqn:Amatrix}
\bordermatrix{
  ~&  x_1       & x_2 & x_3 & x_4 & x_2x_4^{m-1} & x_3x_4^{m-1}& x_4^m\cr
  x_1&z\boldY^{\deg(x_1)}       & z\boldY^{\deg(x_1)} & z\boldY^{\deg(x_1)} & 0 & z\boldY^{\deg(x_1)} & z\boldY^{\deg(x_1)}& 0\cr
      x_2&z\boldY^{\deg(x_2)}       & z\boldY^{\deg(x_2)} & z\boldY^{\deg(x_2)} & 0 & z\boldY^{\deg(x_2)} & z\boldY^{\deg(x_2)} & z\boldY^{\deg(x_2)} \cr  
      x_3&z\boldY^{\deg(x_3)}      & z\boldY^{\deg(x_3)} & z\boldY^{\deg(x_3)} & 0 & z\boldY^{\deg(x_3)} & z\boldY^{\deg(x_3)} & z\boldY^{\deg(x_3)} \cr
    x_4&z\boldY^{\deg(x_4)}      & z\boldY^{\deg(x_4)} & z\boldY^{\deg(x_4)}  & 0 & z\boldY^{\deg(x_4)} & z\boldY^{\deg(x_4)} & z\boldY^{\deg(x_4)} \cr
          x_2x_4^{m-1}&0       & 0 & 0 & z\boldY^{\deg(x_2x_4^{m-1})} & 0 & 0& 0\cr 
          x_3x_4^{m-1}&0       & 0 & 0 & z\boldY^{\deg(x_3x_4^{m-1})} & 0 & 0& 0\cr 
          x_4^m&0       & 0 & 0 & z\boldY^{\deg(x_4^{m})} & 0 & 0& 0\cr 
}
\end{equation} 

and thus the $(\Lambda\times\N)$-graded Poincar\'e series $P_K^{\widehat{R}}(z,\boldY)$ is given by
\[
\frac{1+z\boldY^{\deg(x_4)}}{1-z(\boldY^{\deg(x_1)}+\boldY^{\deg(x_2)}+\boldY^{\deg(x_3)})-z^2(\boldY^{\deg(x_2x_4^2)}+\boldY^{\deg(x_3x_4^2)}+\boldY^{\deg(x_4^3)})+z^3\boldY^{\deg(x_1x_4^3)}} \, .
\]
\end{theorem}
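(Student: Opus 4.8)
The plan is to construct, by an explicit recursion, a minimal free resolution $F$ of $K=\widehat R/(x_1,x_2,x_3,x_4)\widehat R$ over $\widehat R$, check that its leading--term tree (Construction~\ref{con:Rtree}, with the term order $\prec_{\widehat R}$ of Definition~\ref{def:Rorder}) has the transition behaviour encoded in the matrix $A$ of~\eqref{eqn:Amatrix}, and then invoke Lemma~\ref{matrixLemma} and compute. Call $\LC(d_n(\delta))$ the \emph{type} of a generator $\delta$ of $F_n$. The invariant to maintain along the recursion is: every generator of $F_n$ has type one of the seven distinguished monomials $x_1,x_2,x_3,x_4,x_2x_4^{m-1},x_3x_4^{m-1},x_4^m$, and $\LT(d_n(\delta))=\LC(d_n(\delta))\cdot\gamma$ for a single generator $\gamma$ of $F_{n-1}$.

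The recursion starts with $F_0=\widehat R$, $F_1=\widehat R^4$, $d_1(\delta_i)=x_i$, so the types of $\delta_1,\dots,\delta_4$ are $x_1,x_2,x_3,x_4$. Passing from $F_{\le n}$ to $F_{n+1}$, I would attach to each generator $\delta$ of $F_n$ the children prescribed by~\eqref{eqn:Amatrix}: a $\delta$ whose type lies in $\{x_1,x_2,x_3,x_2x_4^{m-1},x_3x_4^{m-1}\}$ gets four children, of types $x_1,x_2,x_3,x_4$; a $\delta$ of type $x_4$ gets three children, of types $x_2x_4^{m-1},x_3x_4^{m-1},x_4^m$; and a $\delta$ of type $x_4^m$ gets three children, of types $x_2,x_3,x_4$. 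On a child $\epsilon$ of type $s$ over $\delta$, one defines $d_{n+1}(\epsilon)=s\,\delta+(\text{tail})$, the tail consisting of strictly $\prec$--larger terms chosen so that $d_nd_{n+1}=0$; because the multiplication table of $\widehat R$ is so rigid---every product $x_ix_j$ is either $0$ or $x_1x_4$, and $x_4^k$ is killed by $x_1,x_2,x_3$ once $k\ge m$---the only tail correction needed beyond multiples of the tail already present in $d_n(\delta)$ is a single Koszul--type term $\pm x_i\,\delta''$, exactly as in the columns of the $d_2$ matrix of Example~\ref{complexExample}. At each step I would verify: $d_nd_{n+1}=0$; every tail coefficient lies in $(x_1,x_2,x_3,x_4)$, hence $F$ is minimal; the leading terms $s\,\delta$ are pairwise distinct (immediate, since the child types over a fixed $\delta$ are distinct and distinct generators are distinct), hence $F_{n+1}$ can be ordered with respect to $d_{n+1}$ and the induced order agrees with the construction; and, crucially, the prescribed children generate $\ker d_n$.

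The main obstacle is this last point, exactness. The plan is the module analogue of Buchberger/Schreyer reduction, adapted to the Artinian ring $\widehat R$: given $z\in\ker d_n$ with leading term $c\,\delta$ (with $c$ a basis monomial of $\widehat R$ and $\delta$ the $\prec$--minimal generator in $\operatorname{supp}(z)$), one subtracts a monomial multiple of $d_{n+1}(\epsilon)$ for an appropriate child $\epsilon$ of $\delta$ so as to strictly lower the leading term; since $F_n$ is a finite dimensional $K$--vector space, iterating drives $z$ to $0$, so $z\in\operatorname{im}d_{n+1}$. What must be shown is that a suitable child always exists, i.e.\ that for each of the seven types the leading coefficients of the prescribed children ``cover'' every basis monomial that can arise as the leading coefficient of a syzygy over a generator of that type, and that no additional syzygies occur. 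This reduces to a finite case analysis on the type $t$ of $\delta$ and the monomial $c$, controlled entirely by which products in $\widehat R$ vanish and by the target--ordering on $F_{n-1}$: for $t\in\{x_1,x_2,x_3,x_2x_4^{m-1},x_3x_4^{m-1}\}$ every non--unit monomial is visibly divisible by one of $x_1,x_2,x_3,x_4$, while for $t=x_4$ and $t=x_4^m$ one must additionally rule out syzygies with a ``small'' leading coefficient, which follows from comparing targets together with the fact that $x_4$ is not a multiple (in $\widehat R$) of $x_1,x_2,x_3$. Carrying out this case analysis is the technical heart of the argument.

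Granting the resolution, the multiset $\{\eta(\epsilon,\delta):\epsilon\gtrdot\delta\}$ is by construction the multiset of types of the children of $\delta$, so it depends only on $\LC(d(\delta))$; Lemma~\ref{matrixLemma} then applies with $A$ the matrix of~\eqref{eqn:Amatrix}, giving $P_K^{\widehat R}(z,\boldY)=f(z,\boldY)/\chi(z,\boldY,1)$. To finish, I would compute $\chi$ and $f$. Writing $a_i=z\boldY^{\deg(x_i)}$, $b=z\boldY^{\deg(x_2x_4^{m-1})}$, $c=z\boldY^{\deg(x_3x_4^{m-1})}$, $e=z\boldY^{\deg(x_4^m)}$, one sees $A$ has rank three, its column space spanned by $v_1=(a_1,a_2,a_3,a_4,0,0,0)$, $v_4=(0,0,0,0,b,c,e)$ and $v_7=(0,a_2,a_3,a_4,0,0,0)$; since $A$ restricted to this space is invertible, $\chi(z,\boldY,t)=t^4\,g(t)$, where $g$ is the characteristic polynomial of the $3\times3$ matrix expressing $A$ in the basis $v_1,v_4,v_7$, namely $g(t)=t^3-(a_1+a_2+a_3)t^2-a_4(b+c+e)\,t+a_1a_4e$. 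Hence $\chi(z,\boldY,1)=1-(a_1+a_2+a_3)-a_4(b+c+e)+a_1a_4e$, which upon rewriting $a_4b=z^2\boldY^{\deg(x_2x_4^m)}$, $a_4c=z^2\boldY^{\deg(x_3x_4^m)}$, $a_4e=z^2\boldY^{\deg(x_4^{m+1})}$, $a_1a_4e=z^3\boldY^{\deg(x_1x_4^{m+1})}$ (using additivity of $\deg$ on monomials) is the denominator in the statement. Finally $\deg_z f\le\deg_z\chi(z,\boldY,1)=3$, and matching the $z^0,z^1,z^2,z^3$ coefficients of $f=\chi(z,\boldY,1)\cdot F(z,\boldY)$ against the ranks of $F_0,\dots,F_3$ produced by the recursion gives $f=1+z\boldY^{\deg(x_4)}$, the claimed numerator.
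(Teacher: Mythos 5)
Your overall strategy is the same as the paper's: build the minimal free resolution inductively so that each generator of $F_n$ acquires exactly the children prescribed by the columns of $A$, prove exactness by a Schreyer-type reduction on leading terms, and then feed the resulting tree into Lemma~\ref{matrixLemma}. Your closing linear algebra is correct and is actually a nice improvement on the paper, which outsources the computation of $\chi(z,\boldY,1)$ to Macaulay2: the column space of $A$ is indeed spanned by $v_1,v_4,v_7$, the induced $3\times3$ matrix has characteristic polynomial $t^3-(a_1+a_2+a_3)t^2-a_4(b+c+e)t+a_1a_4e$, and specializing at $t=1$ gives the stated denominator.

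However, there is a genuine gap where you write that ``carrying out this case analysis is the technical heart of the argument'': that case analysis \emph{is} the proof, and it occupies essentially all of the paper's argument. To make the induction close, the paper has to formulate and propagate four precise invariants --- an ordering hypothesis (no kernel element of $d_n$ has unit leading coefficient), a poset condition on which monomials may appear as non-leading coefficients, the cover condition (your matrix $A$), and, critically, a ``Boundary Condition'' that records the exact possible shapes of $d_i(\epsilon)$ for each of the seven types. Your proposal does not formulate these invariants, and your heuristic description of the differentials is in fact wrong: you claim the only correction beyond the leading term is ``a single Koszul-type term $\pm x_i\,\delta''$,'' but when $\LC(d_i(\epsilon))=x_4$ the construction forces three-term differentials of the form $x_4\delta-t\delta'+x_1\delta''$ (case (viii) of the paper's Boundary Condition). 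Without tracking that extra term, the verification that $d_nd_{n+1}=0$ and the identification of the syzygies over generators of type $x_4$ and $x_4^m$ both break down. Relatedly, your one-line minimality claim (``every tail coefficient lies in $(x_1,\dots,x_4)$, hence $F$ is minimal'') is only valid if the prescribed generating set of $\ker d_n$ is \emph{minimal} at every stage; otherwise the kernel one step later contains an element with unit leading coefficient and the count of children --- hence the matrix $A$ and the Betti numbers --- would be wrong. The paper devotes a separate step to ruling out redundancy among the generators with leading coefficient $x_kx_4^{m-1}$ or $x_4^m$, and some analogue of that argument is needed here as well.
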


\begin{corollary}\label{toricResult} 
Using Equation~(\ref{hatResult}) and the specialization $\boldY\mapsto (1,\ldots,1)$, the Poincar\'e series of the Ehrhart ring of the lattice simplex $\Delta_2^m$ is given by 
\[
P_K^{\E(\Delta_2^m)}(z)=\frac{(1+z)^{m+2}}{1-4z+z^2} \, .
\] 
\end{corollary}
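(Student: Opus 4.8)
The statement to be proved is Corollary~\ref{toricResult}, which reduces at once to Theorem~\ref{mainTheorem} via the specialization $\boldY\mapsto(1,\dots,1)$, so the plan is to prove Theorem~\ref{mainTheorem} first and specialize at the end. For Theorem~\ref{mainTheorem} the substance is to build, level by level, a minimal free resolution $F$ of $K$ over $\widehat{R}$ that is self-similar with respect to the seven monomials $x_1,x_2,x_3,x_4,x_2x_4^{m-1},x_3x_4^{m-1},x_4^m$ indexing~\eqref{eqn:Amatrix}, and to verify that $F$ satisfies the hypotheses of Definition~\ref{def:LT}, Construction~\ref{con:Rtree}, and Lemma~\ref{matrixLemma}. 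The base of the induction is $F_0=\widehat{R}\twoheadrightarrow K$, $F_1=\widehat{R}^4$ with $d_1=(x_1\ x_2\ x_3\ x_4)$, and $F_2=\widehat{R}^{15}$ with the differential displayed in Example~\ref{complexExample}, whose fifteen columns are the evident first syzygies of $(x_1,x_2,x_3,x_4)$ over $\widehat{R}$: three ``square'' syzygies $x_i\delta_i$ ($i=1,2,3$), nine ``mixed'' syzygies enforcing $x_ix_j=0$ and $x_2x_3=x_1x_4$, and three ``top'' syzygies $x_2x_4^{m-1}\delta_4,\ x_3x_4^{m-1}\delta_4,\ x_4^m\delta_4$. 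One checks as in Example~\ref{complexExample} that $F_2$ can be ordered with respect to $d_2$, that the values attained by $\LC(d_2(\cdot))$ are exactly the seven monomials above, and that the number of generators $\epsilon$ of $F_2$ with parent $\LC(d_1(\delta))=\lambda_j$ and type $\LC(d_2(\epsilon))=\lambda_i$ equals the entry $a_{i,j}$ of~\eqref{eqn:Amatrix}.

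The inductive step is the crux: I would show that this self-similarity persists at every homological degree. Concretely, for each generator $\epsilon$ of $F_k$ with $\LT(d_k(\epsilon))=s\,\delta$ and $\LC(d_k(\epsilon))=\lambda$, one shows that $\ker d_k$ is minimally generated near $\epsilon$ by syzygies spawning exactly the children prescribed by the $\lambda$-column of $A$ --- four children carrying labels $x_1,x_2,x_3,x_4$ if $\lambda\in\{x_1,x_2,x_3,x_2x_4^{m-1},x_3x_4^{m-1}\}$; three children carrying $x_2x_4^{m-1},x_3x_4^{m-1},x_4^m$ if $\lambda=x_4$; three children carrying $x_2,x_3,x_4$ (and no $x_1$) if $\lambda=x_4^m$ --- with each new differential entry obtained by straightening the product $\lambda\cdot(\text{edge label})$ back onto the distinguished monomial basis of $\widehat{R}$ via the relations $x_2x_3=x_1x_4$, $x_1^2=x_2^2=x_3^2=0$, $x_1x_2=x_1x_3=0$, $x_2x_4^m=x_3x_4^m=x_4^{m+1}=0$ from Theorem~\ref{thm:Rquotient}. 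Granting this, $F$ is minimal (every differential entry lies in $(x_1,x_2,x_3,x_4)$), each $F_k$ can be ordered with respect to $d_k$ (the leading $(\delta,s)$ data stays distinct), and $F$ resolves $K$. I expect the exactness of $F$ to be the main obstacle. The most transparent route is to build an explicit contracting homotopy $h$, defined generator-by-generator by the same straightening algorithm used for the differentials, and to verify $d_{k+1}h+hd_k=\mathrm{id}$ on each graded component; an alternative is to check that $F$ is a minimal complex whose ranks match $\dim_K\Tor^{\widehat{R}}_k(K,K)$, computed independently for the Artinian ring $\widehat{R}$ of Theorem~\ref{thm:Rquotient}. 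The delicate point either way is confirming that no unexpected identity among the $x_i$ produces a minimal syzygy outside those recorded by $A$; the fact that all relations of $\widehat{R}$ live on chains of powers of $x_4$ keeps this under control. (As a by-product, the nonlinear entries occurring in $d_2$ for $m\geq 2$ give the non-Koszulness asserted in the introduction, so rationality does not follow from~\eqref{eqn:koszul}.)

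Once $F$ has been constructed, Lemma~\ref{matrixLemma} applies with the matrix $A$ of~\eqref{eqn:Amatrix}, so $P_K^{\widehat{R}}(z,\boldY)=F(z,\boldY)=T(z,\boldY)=f(z,\boldY)/\chi(z,\boldY,1)$. Rather than expand the $7\times 7$ characteristic polynomial directly, I would compute $T(z,\boldY)$ from the type-decomposition of the tree: writing $b_i:=z\boldY^{\deg(\lambda_i)}$ and letting $W^X$ denote the generating function of the subtree below a type-$X$ node relative to that node, the cover relations give $W^A=W^B=W^C=W^E=W^F=:W$ together with
\[
W=1+(b_1+b_2+b_3)W+b_4W^D,\qquad W^D=1+(b_5+b_6)W+b_7W^G,\qquad W^G=1+(b_2+b_3)W+b_4W^D.
\]
Solving gives $W^G=(1-b_1)W$, then $W^D=1+(b_5+b_6+b_7-b_1b_7)W$, and after the spurious factor $1-b_1-b_2-b_3$ cancels,
\[
P_K^{\widehat{R}}(z,\boldY)=T(z,\boldY)=W=\frac{1+b_4}{\,1-b_1-b_2-b_3-b_4b_5-b_4b_6-b_4b_7+b_1b_4b_7\,}\,.
\]
Substituting the definitions of the $b_i$ and using additivity of $\deg$ rewrites numerator and denominator in exactly the monomials of Theorem~\ref{mainTheorem}; this cancellation is the ``algebraic cancelling'' mentioned in the introduction, and it is also what forces $\deg_z f\le\deg_z\chi(z,\boldY,1)$. (As a check, setting $\boldY=1$ expands to Betti numbers $1,4,15,56,\dots$ satisfying $\beta_n=4\beta_{n-1}-\beta_{n-2}$, consistent with $F_2=\widehat{R}^{15}$.)

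Finally, Corollary~\ref{toricResult} follows by specializing $\boldY\mapsto(1,\dots,1)$: the numerator becomes $1+z$ and the denominator becomes $1-3z-3z^2+z^3=(1+z)(1-4z+z^2)$, so $P_K^{\widehat{R}}(z)=1/(1-4z+z^2)$, and multiplying by $(1+z)^{d+1}=(1+z)^{m+2}$ as in~\eqref{hatResult} (recall $\Delta_2^m$ is an $(m+1)$-simplex) yields $P_K^{\E(\Delta_2^m)}(z)=(1+z)^{m+2}/(1-4z+z^2)$ --- the Elias--Valla form~\eqref{eqn:gorpoincare} with $h=4$ and $d=m+2$, even though $\widehat{R}$ has socle degree $m+1$.
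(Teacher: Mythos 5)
Your proposal is correct, and for the corollary itself it does exactly what the paper does: specialize the theorem's rational form at $\boldY=(1,\dots,1)$ to get numerator $1+z$ and denominator $1-3z-3z^2+z^3=(1+z)(1-4z+z^2)$, cancel to obtain $P_K^{\widehat{R}}(z)=1/(1-4z+z^2)$, and multiply by $(1+z)^{d+1}=(1+z)^{m+2}$ via Equation~(\ref{hatResult}) since $\Delta_2^m$ has $m+2$ vertices. The one place you genuinely diverge from the paper is in how you extract the closed form from the tree: the paper invokes Lemma~\ref{matrixLemma}, computes $\chi(z,\boldY,1)=\det(I-A)$ in Macaulay2, and then pins down the numerator by the degree bound $\deg_z f\le\deg_z\chi(z,\boldY,1)$ together with the explicit truncation $F_{\leq 3}$; you instead solve the three-state recursion $W$, $W^D$, $W^G$ coming from (Cover Condition) directly, which is cleaner, makes the low degree of the denominator transparent rather than a computation, and delivers the numerator $1+b_4$ without the separate truncation argument. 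Your sanity check $1,4,15,56,\dots$ with $\beta_n=4\beta_{n-1}-\beta_{n-2}$ matches $F_2=\widehat{R}^{15}$. Do note that the heavy lifting you defer --- the inductive verification that the kernel of each $d_n$ is minimally generated by syzygies realizing exactly the columns of $A$, with no extra minimal syzygies --- is precisely Steps 2--4 of the paper's proof of Theorem~\ref{mainTheorem} (the case analysis via the (Boundary Condition) and (Generator Poset) hypotheses), so your sketch identifies the right obstacle but does not discharge it; for the corollary, which is entitled to cite the theorem, this is not a gap.
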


\begin{remark}
Note that the structure of the Poincar\'e series in a single variable in this case does not fully represent the structure of the minimal resolution we construct.
Rather, there is cancellation after specialization.
This indicates that while rational single-variable Poincar\'e series can be useful for asymptotic approximation of Betti numbers, to inspire explicit construction of minimal resolutions, sometimes a more complex multivariate rational function is required.
\end{remark}

\begin{proof}[Proof of Theorem~\ref{mainTheorem}]
Our proof will proceed as follows: first, we establish the hypotheses required for inductively constructing our resolution; second, for the inductive step, we identify kernel elements; third, we prove those kernel elements generate the kernel; fourth, we prove that the resulting resolution is minimal; fifth, we show that this resolution results in a rational Poincar\'e series.

\textbf{Step 1: Establish inductive hypotheses for constructing the resolution.}
We will begin with the initial complex given in Example~\ref{complexExample}.
Using this as a base case, we will inductively construct a minimal free resolution $F$ of the type we desire.
To verify that the complex $F_{\leq 2}$ in Example~\ref{complexExample} is exact at $F_1$, assume that $f$ is an element in the kernel of $d_1$ with leading term supported on some $\delta_i$.
If $i$ is equal to $1$, $2$, or $3$, we may reduce $f$ by subtracting a monomial multiple of one of the elements $d_2(\epsilon_1), \ldots,d_2(\epsilon_{12})$ in a way that strictly reduces the leading term of $f$; this is possible since no element of the kernel of $d_1$ can have a unit as a leading coefficient.
By iterated reductions of this type, we produce an element in the kernel supported on only $\delta_4$.
By the definition of $\widehat{R}$ and $d_1$, such an element must be a linear combination of $d_2(\epsilon_{13})$, $d_2(\epsilon_{14})$, and $d_2(\epsilon_{15})$, and thus our complex is exact.

It is straightforward to verify that our base case given by $F_{\leq 2}$ in Example~\ref{complexExample} satisfies the following four hypotheses.
To state the hypotheses, suppose for the sake of induction that we have produced a complex $F_{\leq n}$ that is exact except at $F_0$ and $F_n$. 

\textbf{Hypothesis (Ordering):} Assume that for each $i$, $F_i$ is ordered with respect to $d_i$, and no element of the kernel of $d_n$ has leading coefficient equal to a unit.

\textbf{Hypothesis (Generator Poset):} For each generator $\epsilon$ of $F_i$, where $1\leq i\leq n$, each coefficient, up to $K$-scalar, of $d_i(\epsilon)-\LT(d_i(\epsilon))$ is either zero or lies strictly below $\LC(d_i(\epsilon))$ in the poset in Figure~\ref{fig:Aposet}.

\begin{figure}[ht]
\begin{tikzpicture}[scale=1]

\draw (-2,0) -- (-1,1)--(0,2)--(1,1)--(-2,0); 
\draw  (-2,1) to (-2,0); 
\draw (-2,0) -- (-6,1); 
\draw (-2,0) -- (-4,1); 

\draw[white,fill=white] (-2,0) circle [radius=0.25];
\node at (-2,0) {$x_1$};

\draw[white,fill=white] (0,2) circle [radius=0.25];
\node at (0,2) {$x_4$};
\draw[white,fill=white] (-1,1) circle [radius=0.25];
\node at (-1,1) {$x_2$};
\draw[white,fill=white] (1,1) circle [radius=0.25];
\node at (1,1) {$x_3$};
\draw[white,fill=white] (-2,1) circle [radius=0.375];
\node at (-2,1)  {$x_2x_4^{m-1}$};
\draw[white,fill=white] (-4,1) circle [radius=0.275];
\node at (-4,1) {$x_3x_4^{m-1}$};
\draw[white,fill=white] (-6,1) circle [radius=0.375];
\node at (-6,1) {$x_4^m$};
\end{tikzpicture}
\caption{}
\label{fig:Aposet}
\end{figure}

\textbf{Hypothesis (Cover Condition):} For each generator $\delta$ of a summand of $F_{i}$, where $i$ is at most $n-1$ and $n$ is at least two, let the leading coefficient $\LC(d_i(\delta))$ be the monomial $s$. Then the following holds:
\begin{itemize}
\item If $s\in\{x_1,x_2,x_3,x_2x_4^{m-1},x_3x_4^{m-1}\}$, then there are exactly four elements covering $\delta$ in $T_{\leq n}$, and their leading coefficients are $x_1$, $x_2$, $x_3$, and $x_4$. 
\item If $s=x_4$, then there are exactly three elements covering $\delta$ in $T_{\leq n}$, and their leading coefficients are $x_2x_4^{m-1}$, $x_3x_4^{m-1}$, and $x_4^m$. 
\item If $s=x_4^m$, then there are exactly three elements covering $\delta$ in $T_{\leq n}$, and their leading coefficients are $x_2$, $x_3$, and $x_4$. 
\end{itemize}
These are the only  values that $s$ takes.

\textbf{Hypothesis (Boundary Condition):} For each generator $\epsilon$ of a summand of $F_i$, where $i$ is at least one and $\LT(d_i(\epsilon))=s\delta$:  

If $s\in\{x_2x_4^{m-1},x_3x_4^{m-1},x_4^m\}$, then one of the following holds:
\begin{itemize}
\item[(i)] $d_i(\epsilon)=s\delta$
\item[(ii)] $d_i(\epsilon)=s\delta + \sigma x_1\delta'$, where $\sigma\in\{1,-1\}$, $\LC(d_{i-1}(\delta))=x_4$, and $\LC(d_{i-1}(\delta'))=x_4^m$
\end{itemize}
If $s\in\{x_2,x_3\}$, then one of the following holds:
\begin{itemize}
\item[(iii)] $d_i(\epsilon)=s\delta$ where $\LC(d_{i-1}(\delta))=t\;\text{and}\;st=0$
\item[(iv)] $d_i(\epsilon)=s\delta - x_1\delta'$, where {${\LT(d_{i-1}(\delta))=t\gamma}$} for $t\in\{x_2x_4^{m-1},x_3x_4^{m-1}\}$\\ with $st\neq0$ and ${\LT(d_{i-1}(\delta'))=x_4^m\gamma}$
\item[(v)] $d_i(\epsilon)=s\delta - x_1\delta'$ where $\LT(d_{i-1}(\delta))=t\gamma$ for $t\in\{x_1,x_2,x_3\}$ with $st\neq0$ and $\LT(d_{i-1}(\delta'))=x_4\gamma$
\end{itemize}

If $s=x_4$, then one of the following holds:
\begin{itemize}
\item[(vi)] $d_{i}(\epsilon)=s\delta$ where $\LC(d_{i-1}(\delta))\in\{x_2x_4^{m-1},x_3x_4^{m-1},x_4^m\}$
\item[(vii)] $d_{i}(\epsilon)=s\delta -\sigma x_1\delta'$ where $\sigma\in\{1,-1\}$, $d_{i-1}(\delta)=t\gamma-x_1\gamma'$ for $t\in\{x_2x_4^{m-1},x_3x_4^{m-1}\}$ and $\LT(d_{i-1}(\delta'))=x_4\gamma'$
\item[(viii)] $d_{i}(\epsilon)=s\delta - t\delta'+x_1\delta''$ where $d_{i-1}(\delta)=t\gamma-x_1\gamma'$ for $t\in\{x_1,x_2,x_3\}$, $\LT(d_{i-1}(\delta'))=x_4\gamma$, and $\LT(d_{i-1}(\delta''))=x_4\gamma'$
\item[(ix)] $d_i(\epsilon)=s\delta - t\delta'$ for $t\in\{x_1,x_2,x_3\}$ where $\LT(d_{i-1}(\delta))=t\gamma$ and $\LT(d_{i-1}(\delta'))=x_4\gamma$
\end{itemize}
If $s=x_1$, then \begin{itemize}\item [(x)]$d_i(\epsilon)=s\delta$\end{itemize}

\textbf{Step 2: Inductive construction of kernel elements.}
Assume that hypotheses (Ordering), (Generator Poset), (Cover Condition), and (Boundary Condition) are satisfied by our complex $F_{\leq n}$, exact except at $F_0$ and $F_n$. 
We will now use hypotheses (Ordering), (Generator Poset), (Cover Condition), and (Boundary Condition) to show that for each generator $\epsilon$ of $F_n$, there exists a set of homogeneous kernel elements whose leading term is supported on $\epsilon$ and whose leading coefficients satisfy hypothesis (Cover Condition). 

Specifically, assume that $\epsilon$ is such that $\LC(d_n(\epsilon))=s$:
\begin{itemize}
\item For each of the cases $s\in\{x_1,x_2,x_3,x_2x_4^{m-1},x_3x_4^{m-1}\}$ we find an element $f_i$ of $\ker d_n$ with leading term $u\epsilon$ for $u\in\{x_1,x_2,x_3,x_4\}$. 
\item For $s=x_4$ we find a kernel element $f_i$ with leading term $u\epsilon$ for each \\
$u\in\{x_2x_4^{m-1},x_3x_4^{m-1},x_4^m\}$. 
\item For $s=x_4^m$ we find a kernel element $f_i$ with leading term $u\epsilon$ for each $u\in\{x_2,x_3,x_4\}$. 
\end{itemize}
Observe that this is precisely what is needed to extend our resolution while satisfying (Cover Condition).
We will denote this collection of kernel elements by $\{f_i\}$ and let them be ordered by $\prec$ on the minimal supports (with tie breaking by $\prec_{\widehat{R}}$ on the leading coefficients).

Let $\epsilon$ be a generator of $F_n$ with $\LT(d_n(\epsilon))=s\delta$ and $\LT(d_{n-1}(\delta))=t\gamma$. 
We construct the elements set $\{f_i\}$ in a case-by-case manner as follows.

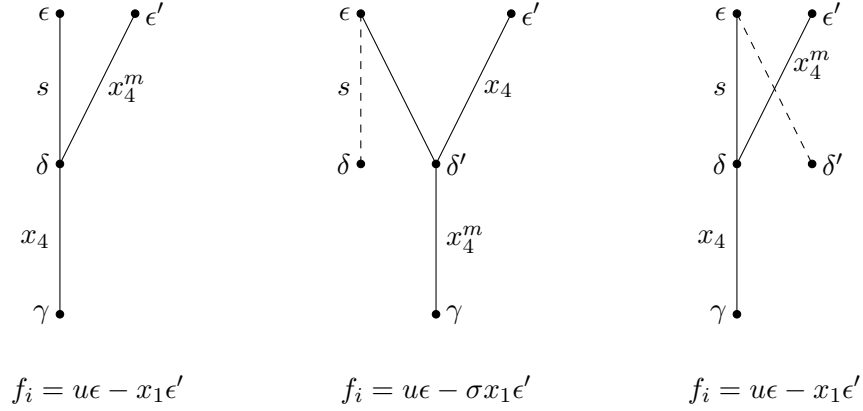
\begin{figure}
\begin{tikzpicture}[scale=1]
\draw (0,0) -- (0,2)--(0,4);
\draw (1,4)--(0,2);
\draw[fill] (0,0) circle [radius=0.05];
\draw[fill] (1,4) circle [radius=0.05];
\draw[fill] (0,2) circle [radius=0.05];
\draw[fill] (0,4) circle [radius=0.05];
\node at (0,2) [left]{$\delta$};
\node at (0,4) [left]{$\epsilon$};
\node at (1,4) [right]{$\epsilon'$};
\node at (0,0) [left]{$\gamma$};
\node at (0,3) [left]{$s$};
\node at (1/2,3) [right]{$x_4^m$};
\node at (0,1) [left]{$x_4$};
\node at (0+1/2,-1) {$f_i=u\epsilon-x_1\epsilon'$};

\draw [dashed](4,4) -- (4,2);
\draw (4,4) -- (5,2);
\draw (5,0) -- (5,2);
\draw (5,2) -- (6,4);
\draw[fill] (4,4) circle [radius=0.05];
\draw[fill] (6,4) circle [radius=0.05];
\draw[fill] (4,2) circle [radius=0.05];
\draw[fill] (5,2) circle [radius=0.05];
\draw[fill] (5,0) circle [radius=0.05];
\node at (4,2) [left]{$\delta$};
\node at (5,2) [right]{$\delta'$};
\node at (5,1) [right]{$x_4^m$};
\node at (5,0) [right]{$\gamma$};
\node at (4,4) [left]{$\epsilon$};
\node at (6,4) [right]{$\epsilon'$};
\node at (4,3) [left]{$s$};
\node at (5+1/2,3) [right]{$x_4$};
\node at (5,-1) {$f_i=u\epsilon-\sigma x_1\epsilon'$};

\draw (9,0) -- (9,2)--(9,4);
\draw (10,4)--(9,2);
\draw [dashed](9,4) -- (10,2);
\draw[fill] (9,0) circle [radius=0.05];
\draw[fill] (10,4) circle [radius=0.05];
\draw[fill] (9,2) circle [radius=0.05];
\draw[fill] (9,4) circle [radius=0.05];
\draw[fill] (10,2) circle [radius=0.05];
\node at (9,2) [left]{$\delta$};
\node at (10,2) [right]{$\delta'$};
\node at (9,4) [left]{$\epsilon$};
\node at (10,4) [right]{$\epsilon'$};
\node at (9,0) [left]{$\gamma$};
\node at (9,3) [left]{$s$};
\node at (10,3) [above]{$x_4^m$};
\node at (9,1) [left]{$x_4$};
\node at (9+1/2,-1) {$f_i=u\epsilon-x_1\epsilon'$};
\end{tikzpicture}
\caption{The case $s\in \{x_2x_4^{m-1}, x_3x_4^{m-1}\}$.}
\label{fig:case1}
\end{figure}

\textbf{Case: $s\in \{x_2x_4^{m-1}, x_3x_4^{m-1}\}$.} 
We refer to Figure~\ref{fig:case1} throughout this argument.
By (Boundary Condition), $d_n(\epsilon)$ is either equal to $s\delta$ or $s\delta+\sigma x_1\delta'$, where $\LC(d_{n-1}(\delta'))=x_4^m$. 
We suppose $u\in\{x_1,x_2,x_3,x_4\}$ and consider three subcases.

\begin{itemize}
\item If $d_n(u\epsilon)=us\delta$ is zero, we set $f_i:= u\epsilon$.
This can only happen in two situations, either when $s=x_3x_4^{m-1}$ and $u$ is equal to $x_1$, $x_3$, or $x_4$, or else when $s=x_2x_4^{m-1}$ and $u$ is equal to $x_1$, $x_2$, or $x_4$.
Note that this assignment of $f_i$ satisfies the four inductive hypotheses.
\item If $d_n(\epsilon)=s\delta$ and $d_n(u\epsilon)\neq 0$, then $d_n(u\epsilon)$ must be equal to $x_1x_4^m\delta$.
This can only happen when either $s=x_2x_4^{m-1}$ and $u=x_3$ or when $s=x_3x_4^{m-1}$ and $u=x_2$, and both situations require consideration of the binomial relation $x_2x_3=x_1x_4$.
See the left-hand skematic in Figure~\ref{fig:case1} to illustrate the following argument.
Since $\delta\in F_{n-1}$ is covered in $T_{\leq n}$ by an element $\epsilon\in F_n$ having $\LC(d_n(\epsilon))$ equal to one of $x_2x_4^{m-1}$ or $x_3x_4^{m-1}$, by (Cover Condition) we have that $\LC(d_{n-1}(\delta))=x_4$.
Thus, again by (Cover Condition), there exists $\epsilon'\in F_n$ with $\LT(d_n(\epsilon'))=x_4^m\delta$. 
It follows that  $d_n(x_1\epsilon')=x_1x_4^m\delta$ since by (Generator Poset), each coefficient of $d_n(\epsilon')-\LT(d_n(\epsilon'))$ is either zero or $x_1$, and $x_1^2=0$.
Thus, $d_n(u\epsilon-x_1\epsilon')=0$ and we can set $f_i:=u\epsilon-x_1\epsilon'$.
Observe that this assignment of $f_i$ satisfies the four inductive hypotheses.
\item If $d_n(\epsilon)=s\delta +\sigma x_1\delta'$, observe that (Boundary Condition) implies that since $d_n(\epsilon)=s\delta +\sigma x_1\delta'$, we have $\LC(d_{n-1}(\delta'))=x_4^m$.
We have three subsubcases that arise in this subcase, and note the assignment of $f_i$ given in each of them satisfies the four inductive hypotheses.
\begin{itemize}
\item If $d_n(u\epsilon)$ is equal to zero, we set $f_i:= u\epsilon$.
This will happen when $u=x_1$ and for certain pairs of $u$ and $s$ when $u=x_2$ or $u=x_3$, with the remaining pairs handled in the subsubcase $su=x_1x_4^m$ below.
\item If $u=x_4$, then $us=0$ for both possible values of $s$.
Thus, $d_n(u\epsilon)=\sigma x_1x_4\delta'$.
See the center skematic in Figure~\ref{fig:case1} illustrating the following argument.
Since $\LC(d_{n-1}(\delta'))=x_4^m$, by (Cover Condition) there exists $\epsilon'$ such that $\LC(d_n(\epsilon'))=x_4\delta'$.
Since by (Boundary Condition) any coefficient of $d_n(\epsilon')-\LT(d_n(\epsilon'))$ is either zero, $x_1$, $x_2$, or $x_3$, and multiplying any of these variables by $x_1$ results in a zero, we have that $d_n(u\epsilon-\sigma x_1\epsilon')=0$.
Thus, we can set $f_i:=u\epsilon-\sigma x_1\epsilon'$.
\item As before, $su=x_1x_4^m$ can only happen in two situations, when either $s=x_2x_4^{m-1}$ and $u=x_3$ or when $s=x_3x_4^{m-1}$ and $u=x_2$.
See the right-hand skematic in Figure~\ref{fig:case1} illustrating the following argument.
In either event, we have that $d_n(u\epsilon)=x_1x_4^m\delta$, since multiplying $x_2$ or $x_3$ by the $x_1$ in the coefficient of $\delta'$ will zero out that term. 
As in a previous case, by (Cover Condition), we can find an $\epsilon'$ such that $\LT(d_n(\epsilon'))=x_4^m\delta$ and $d_n(x_1\epsilon')=x_1x_4^m\delta$.
In this case, $d_n(u\epsilon-x_1\epsilon')=0$, hence we set $f_i:=u\epsilon-x_1\epsilon'$.
\end{itemize}
\end{itemize}

\textbf{Case $s=x_4^m$:}
By (Boundary Condition), $d_n(\epsilon)$ is equal to $s\delta$ or $s\delta+\sigma x_1\delta'$, where $\LC(d_{n-1}(\delta'))=x_4^m$. 

\begin{itemize} \item If $u$ equals $x_2$ or $x_3$, then since $us$ and $ux_1$ are both equal to zero, $d_n(u\epsilon)=0$ and we set $f_i:=u\epsilon$.
\item If instead $u$ is equal to $x_4$, then $d_n(u\epsilon)$ is either zero or is equal to $\sigma x_1x_4\delta'$ for some $\delta'$ with $\LC(d_{n-1}\delta')=x_4^m$. By  (Cover Condition), since $\LC(d_{n-1}\delta')=x_4^m$, there exists a generator $\epsilon'$ of $F_n$ with $\LT(d_n(\epsilon'))=x_4\delta'$. By (Boundary Condition) applied to $\epsilon'$, since $\LC(d_{n-1}(\delta'))=x_4^m$, we have the stronger condition that $d_n(\epsilon')=x_4\delta'$, so that $d_n(u\epsilon-\sigma x_1\epsilon')=0$. Thus we set $f_i=u\epsilon-\sigma x_1\epsilon'$.

\item If $u=x_1$, then $\LT(d_n(u\epsilon))=x_1x_4^m\delta$, which is nonzero. By construction, any generator $\epsilon'$ which is after $\epsilon$ in the ordering $\prec$ of generators of $F_n$ has   the leading term of its image under $d_n$ supported on generators that are after or equal to $\delta$ in the ordering of generators of $d_{n-1}$.  By (Cover Condition), $\epsilon$ is the $\prec$-maximal generator whose image under $d_n$ is supported on $\delta$. Consequently, the equation 
\[
d_n(u\epsilon)=-d_n\left(\sum_{\epsilon\prec\epsilon_k} r_k\epsilon_k\right)
\] 
has no solutions for $r_k$ in $\widehat{R}$, and so no homogeneous kernel element of $d_n$ has leading term $x_1\epsilon$. Since any monomial of $\widehat{R}$ of degree greater than one is divisible by $x_4$, no minimal generator of $\ker d_n$ has leading term $u\epsilon$ where $u$ is different from $x_2$, $x_3$, or $x_4$.
\end{itemize}

\textbf{Case $s\in\{x_1,x_2,x_3\}$:}
By (Boundary Condition), $d_n(\epsilon)=s\delta$ or $d_n(\epsilon)=s\delta-x_1\delta'$.
\begin{itemize} \item Let $u\in\{x_1,x_2,x_3\}$. Observe that $us$ is either zero or equal to $x_2x_3$, and that $ux_1$ is always zero. Thus $d_n(u\epsilon)$ is equal to $us\delta$, and is either zero or equal to $x_1x_4\delta$ (under the equivalence $x_2x_3=x_1x_4$). In the first case we set $f_i=u\epsilon$. In the second case, we note that since by hypothesis $s\delta:=\LT(d_n(\epsilon))$ is among $x_1\delta$, $x_2\delta$, and $x_3\delta$, by (Cover Condition) there exists $\epsilon'$ such that $\LT(d_n(\epsilon'))=x_4\delta$. By (Generator Poset), $d_n(x_1\epsilon')=x_1x_4\delta$ since $x_1^2=x_1x_2=x_1x_3=0$, so that $d_n(u\epsilon-x_1\epsilon')=0$. We set $f_i=u\epsilon-x_1\epsilon'$.

\item Let instead $u=x_4$, and consider the two cases: $d_n(\epsilon)=s\delta$ and $d_n(\epsilon)=s\delta-x_1\delta'$. Again note that by (Cover Condition) there exists $\epsilon'$ with $\LT(d_n(\epsilon'))=x_4\delta$ and recall that by (Poset Condition), $d_n(x_1\epsilon')=x_1x_4\delta$. 
\begin{itemize}
\item In the case that $d_n(\epsilon)=s\delta=x_1\delta$, we have that $d_n(u\epsilon-x_1\epsilon')=0$, and we set $f_i=u\epsilon-x_1\epsilon'$. Let instead $d_n(\epsilon)=s\delta$ where $s$  equals $x_2$ or $x_3$, and note that by (Boundary Condition), $\LC(d_{n-1}(\delta))=t$ where $st=0$. Then by (Boundary Condition) there are two possibilities for $d_n(s\epsilon')$. Because $sx_1=0$, either $d_n(s\epsilon')$ is equal to $x_4s\delta$, or else it is equal to $x_4s\delta-st\delta'$ for some generator $\delta'$ of $F_{n-1}$. As we have established, $st=0$, and so $d_n(s\epsilon')=x_4s\delta$, so that $d_n(u\epsilon-s\epsilon')=0$. We set $f_i=u\epsilon-s\epsilon'$. 

\item If $d_n(\epsilon)=s\delta-x_1\delta'$, then by (Boundary Condition), $s$ is equal to $x_2$ or $x_3$. There are two possibilities, both having $\LT(d_{n-1}(\delta))=t\gamma$ where $st\neq0$. {\bf{The first possibility}} is that $t$ is among $\{x_2x_4^{m-1},x_3x_4^{m-1}\}$ and $\LT(d_{n-1}(\delta'))=x_4^m\gamma$. In this case, by (Cover Condition) there exists $\epsilon''$ such that $\LT(d_n(\epsilon'')=x_4\delta'$. By (Boundary Condition) applied to $\epsilon'$, we see that since $t$ is among $\{x_2x_4^{m-1},x_3x_4^{m-1}\}$, $d_n(\epsilon')$ is either $x_4\delta$ or $x_4\delta-\sigma x_1\delta'$. In either case, $d_n(s\epsilon')$ is equal to $x_4s\delta$. Also by (Boundary Condition), since $\LC(d_{n-1}(\delta'))$ is $x_4^{m-1}$, we see that $d_n(\epsilon'')$ is equal to $x_4\delta'$. Thus we see that $d_n(u\epsilon-s\epsilon'+x_1\epsilon'')=0$ and we set $f_i=u\epsilon-s\epsilon'+x_1\epsilon''$.

{\bf{The second possibility}} is that $t$ is among $\{x_1,x_2,x_3\}$ and $\LT(d_{n-1}(\delta'))=x_4\gamma$. In this case we see that by (Boundary Condition), since $sx_1=0$, we have that $d_n(s\epsilon')$ is equal to $x_4s\delta-st\delta'$. Observe that for $s$ equal to $x_2$ or $x_3$ and $t$ among $\{x_1,x_2,x_3\}$, the fact that $st$ is nonzero implies that $st$ equals $x_2x_3=x_1x_4$. It follows that $d_n(u\epsilon-s\epsilon')=0$ and we set $f_i=u\epsilon-s\epsilon'$. 
\end{itemize}
\end{itemize}

\textbf{Case $s=x_4$:}

\begin{itemize}
\item For $u$ any monomial of $\N$--degree less than or equal to $m-1$, or for $u=x_1x_4^{m-1}$, $d_n(u\epsilon)$ is nonzero, and as argued earlier, since the leading coefficient of $\epsilon$ is $x_4$, (Cover Condition) implies that the ordering $\prec$ of the generators of $F_n$ precludes any homogeneous kernel element of $d_n$ having leading term $u\epsilon$. 

The monomials having $\N$--degree equal to $m$ are $x_2x_4^{m-1}$, $x_3x_4^{m-1}$, and $x_4^{m}$. We show that each of these monomials is the leading coefficient of homogeneous kernel element of $d_n$ having leading support $\epsilon$.

If $u\in\{x_2x_4^{m-1},x_3x_4^{m-1}\}$, then by (Boundary Condition), either $d(u\epsilon))=0$ or $d(u\epsilon)=-x_1x_4^m\delta'$, where $\LC(d(\delta'))=x_4$. In the first case we set $f_i=u\epsilon$. In the latter case, by (Cover Condition) there exists $\epsilon'$ such that $\LT(d(\epsilon'))=x_4^m\delta'$. By (Generator Poset), $d_n(x_1\epsilon')=x_1x_4^m\delta'$, so that $d_n(u\epsilon+x_1\epsilon')=0$ and we set $f_i=u\epsilon+x_1\epsilon'$.

If $u=x_4^m$, then by (Boundary Condition) we have that either $d_n(\epsilon)=x_4\delta$, so that $d_n(u\epsilon)=0$,
  or $d_n(u\epsilon)=\sigma x_1x_4^m\delta'$, where $\LC(d_{n-1}(\delta'))=x_4$. In the latter case, we have by (Cover Condition) that there exists $\epsilon'$ such that $\LT(d_n(\epsilon'))=x_4^m\delta'$. It follows that $d_n(u\epsilon-\sigma x_1\epsilon')=0$ and we set $f_i=u\epsilon-\sigma x_1\epsilon'$. 
\end{itemize}

Thus, we have constructed a set of kernel elements $\{f_i\}$ satisfying the properties stated at the beginning of this step.

\textbf{Step 3: Proof that the elements $\{f_i\}$ generate the kernel.}
Note that by our inductive construction, no element of the kernel has a leading coefficient equal to a unit in $\widehat{R}$.
Given a homogeneous element $f$ of $\ker d_n$ with $\LT(f)=v\epsilon$, we have by hypothesis (Cover Condition) that $\LC(d_n(\epsilon))=s$ is among the cases above. If $s\in\{x_1,x_2,x_3,x_2x_4^{m-1},x_3x_4^{m-1}\}$, then since by hypothesis the leading coefficient of $f$ is divisible by some variable, $f$ may be reduced by one of the $f_i$'s with the same minimal support to a kernel element with strictly larger leading term. 

If $s=x_4$, then as established above, $v$ is divisible by one of $x_2x_4^{m-1}$, $x_3x_4^{m-1}$, or $x_4^m$, and  $f$ may be reduced by one of the $f_i$'s with the same minimal support to a kernel element with strictly larger leading term.

As there is a finite collection of possible leading terms, this process will reduce $f$ to zero, showing that the $f_i$ are a generating set for $\ker d_n$.

\textbf{Step 4: Proof that this is a minimal resolution.}

Now that we have proved that the set of $f_i$'s generate the kernel of $d_n$, we proceed by augmenting $F_{\leq n}$ with a free $\widehat{R}$-module with basis in bijection with the $f_i$'s.
Define a map $d_{n+1}$ sending each new basis element to its associated $f_i$, and the result is a complex $F_{\leq n+1}$, which is exact except at $F_0$ and $F_{n+1}$.
We need to show that our choice of kernel generators $f_i$ is a minimal set of generators.

Since none of the $f_i$ whose leading coefficient is a variable can be written as an $\widehat{R}$-linear combination of the others, we only need consider the minimality of $f_i$'s whose leading coefficient is $x_kx_4^{m-1}$, where $k$ is 2, 3, or 4.
Let $\LT(f_i)=x_kx_4^{m-1}\epsilon$, where $\LT(d_n(\epsilon))=x_4\gamma$ and $f_j\neq f_i$ be such that $[\epsilon](s_jf_j)$, the coefficient of $s_jf_j$ on the summand generated by $\epsilon$, is also $x_kx_4^{m-1}$. 
Since, by construction, the only coefficients appearing in $f_j$ are in $\{x_1,x_2,x_3,x_4,x_2x_4^{m-1},x_3x_4^{m-1},x_4^{m}\}$, we see by divisibility that $[\epsilon](f_j)$ is $x_2$, $x_3$, or $x_4$. 
Also by construction, $[\epsilon](f_j)=x_4$ implies that $\LT(f_j)=x_4\epsilon$, which is a contradiction with $\LT(d_n(\epsilon))=x_4\gamma$, thus $[\epsilon](f_j)$ is $x_2$ or $x_3$, and $s_j=x_4^{m-1}$. 
Again by construction, $\LT(f_j)=x_4\epsilon'$, where $\LT(d_n(\epsilon'))=x_k\gamma$, hence $\LC(s_jf_j)=x_4^m\neq0$, so that if $\displaystyle\LT\left(\sum_{\ell\neq j}s_\ell f_\ell\right)=x_kx_4^{m-1}\epsilon$, then there exists $\ell$ such that $[\epsilon']s_\ell f_\ell=x_4^m$. 
But no $f_\ell$ with $\ell\neq j$ can have $[\epsilon']f_\ell$ divisible by $x_4$. 
Thus $f_i$ is minimal, and the collection of $f_i$'s is in fact a minimal generating set. 

\textbf{Step 5: Verification that the inductive result satisfies the four hypotheses and produces the matrix $A$, computation of the rational Poincar\'e series.}
It is immediate from the construction above that the hypotheses (Ordering), (Generator Poset), (Cover Condition), and (Boundary Condition) are satisfied by the augmented complex $F_{\leq n+1}$.
Further, it is immediate from these four hypotheses that the matrix $A$ given in the statement of the theorem is correct.
We therefore have established an inductive construction of the minimal free resolution of $K$ over $\widehat{R}$.

Having constructed our desired minimal free resolution, by Lemma~\ref{matrixLemma} we have that $P^{\widehat{R}}_K(z,t)$ is rational of the form 
\[
\frac{f(z,\boldY)}{\chi(z,\boldY,1)} \, 
\] 
where $\chi(z,\boldY,t)$ is the characteristic polynomial of the matrix $A$.
Computation in Macaulay2 gives that 
\[
\chi(z,\boldY,1)=1-z(\boldY^{\deg(x_1)}+\boldY^{\deg(x_2)}+\boldY^{\deg(x_3)})-z^2(\boldY^{\deg(x_2x_4^{m})}+\boldY^{\deg(x_3x_4^m)}+\boldY^{\deg(x_4^{m+1})})+z^3\boldY^{\deg(x_1x_4^{m+1})}.
\]
Using the minimal resolution construction given above, we compute that  $F_{\leq3}(z,\boldY)\cdot \chi(z,\boldY,1)$ is given by
\[
1+z\boldY^{\deg(x_4)}+z^4E(z,\boldY) \, ,
\] 
where $E(z,\boldY)$ is a polynomial in $K[z,\boldY]$. 
By Lemma~\ref{matrixLemma}, 
\[
f(z,\boldY)=\chi(z,\boldY,1)\cdot F(z,\boldY) \, ,
\] so that 
\[
f(z,\boldY)-F_{\leq3}(z,\boldY)\cdot \chi(z,\boldY,1)=(F(z,\boldY)-F_{\leq3}(z,\boldY))\cdot \chi(z,\boldY,1)
\] 
is a polynomial divisible by $z^4$. 
Since the $z$-degree of $f(z,\boldY)$ is at most three (by the degree of $\chi(z,\boldY,1)$), we see that $f(z,\boldY)=1+z\boldY^{\deg(x_4)}$, and the rational form follows.
\end{proof}

\bibliographystyle{amsplain}
\bibliography{BrianBib}
\end{document}